\numberwithin{equation}{section}
\newtheorem{theorem}{Theorem}[section]
\newtheorem{corollary}[theorem]{Corollary}
\theoremstyle{definition}
\newtheorem{definition}[theorem]{Definition}
\newtheorem{Assumption}[theorem]{Assumptions}
\newtheorem{remark}[theorem]{Remark}
\newtheorem{remark and definition}[theorem]{Remark and Definition}
\newtheorem{remark and notation}[theorem]{Remark and Notation}
\newtheorem{notation}[theorem]{Notation}
\newtheorem{example}[theorem]{Example}
\newtheorem{conjecture}[theorem]{Conjecture}
\newtheorem{question}[theorem]{Question}
\newcommand\Spec{\operatorname{Spec}}
\newcommand\h{\operatorname{ht}}
\newcommand\Ker{\operatorname{\Ker}}
\newcommand\Ass{\operatorname{Ass}}
\newcommand\Min{\operatorname{Min}}
\def\lra{\longrightarrow}
\def\ZZ{{\mathbb Z}}
\def\p{{\mathfrak p}}
\numberwithin{equation}{section}
\begin{document}

\title[A Formula for Symbolic Powers]{A Formula for Symbolic Powers}


\author[Mantero,\, Miranda-Neto, \, Nagel]{Paolo Mantero, \, Cleto B. Miranda-Neto, \, Uwe Nagel}

\address{Department of Mathematical Sciences, University of Arkansas, Fayetteville, AR 72701, USA}
\email{pmantero@uark.edu}

\address{Departamento de Matem\'atica, Universidade Federal da Para\'iba - 58051-900, Jo\~ao Pessoa, PB, Brazil}
\email{cleto@mat.ufpb.br}

\address{Department of Mathematics, University of Kentucky, 715 Patterson Office Tower,
Lexington, KY 40506-0027, USA}
\email{uwe.nagel@uky.edu}

\thanks{P. Mantero was partially supported by Simons Foundation grant  \#962192.\\
\indent 	C. Miranda-Neto was partially supported by CNPq-Brazil grant 301029/2019-9.\\
\indent U. Nagel was partially supported by Simons Foundation grant  \#636513.}

\keywords{Symbolic power, symbolic defect, powers of ideals, multiplicity, Jacobian ideal.}
\subjclass[2010]{Primary: 13C05, 13A15, 13H10, 13E15; Secondary: 13C13, 13F20, 13H15, 13N15.}

\begin{abstract} Let $S$ be a Cohen-Macaulay ring which is local or standard graded over a field, and let $I$ be an unmixed ideal that is generically a complete intersection. Our goal in this paper is multi-fold. First, we give a multiplicity-based characterization of when an unmixed subideal $J \subseteq I^{(m)}$ equals the $m$-th symbolic power $I^{(m)}$ of $I$. Second, we provide a saturation-type formula to compute $I^{(m)}$ and employ it to deduce a theoretical criterion for when $I^{(m)}=I^m$. Third, we establish an explicit linear bound on the exponent that makes the saturation  formula effective, and use it to obtain lower bounds for the initial degree of $I^{(m)}$. Along the way, we prove a generalized version of a conjecture raised by Eisenbud and Mazur about ${\rm ann}_S(I^{(m)}/I^m)$, and we propose a conjecture connecting the symbolic defect of an ideal to Jacobian ideals.

\end{abstract}

\maketitle

\section{Introduction}

Symbolic powers of ideals have a relevant role in a number of results and open problems in the literature. Part of the reason is their geometric significance, which appears in a celebrated theorem proved by Zariski and Nagata (later generalized by Hochster and Eisenbud \cite{EH}) stating that the $m$-th symbolic power $I^{(m)}$ of a given radical ideal $I$ in a polynomial ring $R$ over a perfect field consists of all the hypersurfaces passing at least $m$ times through each point of the variety $V(I)$.
(Recall that $I^{(m)}=I^mR_W\cap R$, where $W\subset R$ is the multiplicative set of all $R/I$-regular elements.) For further information, including the well-justified relevance of symbolic powers in the literature, we refer to the survey \cite{symb} and references therein, such as, e.g., \cite{ELS}, \cite{H-Hu} and \cite{Ho-Hu}.

In this note we are concerned with the symbolic powers $I^{(m)}$ of an unmixed ideal $I$ that is generically a complete intersection in a Cohen-Macaulay ring $S$ which is either local or standard graded over a field. 

As an illustration of our work, we observe that a main source of difficulties when investigating symbolic powers $I^{(m)}$ of an unmixed ideal $I$ is the lack of explicit formulas to compute $I^{(m)}$, even in the polynomial ring $k[x,y,z]$. So far, the only  general formula in the literature is the following saturation formula
$$I^{(m)}=I^m :J^{\infty},\qquad \text{where}\qquad J:=\bigcap_{\p\in \Ass^*(I)-\Ass(S/I)}\p$$
and $\Ass^*(I):=\bigcup_{m\geq 1}\Ass(S/I^m)$, (e.g. see Eisenbud \cite[Prop~3.13]{Ei}, Herzog, Hibi and Trung\cite[Section~3]{HHT}). Since, in general, the set $\Ass^*(I)$ is not well--understood, the above is a {\em theoretical} formula and researchers (e.g. \cite[Comment after Exercise~1.55]{Gr}) have lamented the need for an explicit ideal $H$, defined only in terms of $I$ (and not depending on its powers $I^m$ or $m$) such that 
$$I^{(m)}=I^m :H^{\infty}.$$
In one of our main results, Theorem \ref{thm2}, we fill this gap and provide an explicit description of an ideal $H$, easily computed from the presentation of $I$, satisfying the above formula.

We now outline the main results of this paper. In what follows, $c$ denotes the height of the ideal $I$. 

\begin{itemize}
	\item (Theorem \ref{thm1}) Let $m\in \ZZ_+$ and let $J\subseteq I^{(m)}$ be an unmixed ideal with the same height $c$. Then, $J=I^{(m)}$ if and only if
	$${\rm e}(S/J)  =  {\rm e}(S/I)\binom{c+m-1}{c}.$$
	
	\medskip
	
	\item (Theorem \ref{thm2}) Let ${\mathbf F}_c(I)$ be the $c$-th Fitting ideal of $I$. Then, for any $m\in \ZZ_+$,
	$$I^{(m)}  = I^m: {\mathbf F}_c(I)^{\infty}.$$
	As a consequence, we obtain the following theoretical characterization for the equality between ordinary and symbolic powers: $I^{(m)}  =  I^m$ if and only if the ideal ${\mathbf F}_c(I)$ contains an $S/I^m$-regular element (Corollary \ref{cor2}).
	
	\medskip
	
	\item (Theorem \ref{m-1}) We prove that, in general, we can replace the saturation with a fairly small power of ${\mathbf F}_c(I)$; more precisely, we have the formula
	$$I^{(m)}  = I^m: {\mathbf F}_c(I)^{m-1}.$$

	\item (Corollary \ref{alpha}) We prove an explicit lower bound for the initial degree of $I^{(m)}$ when $S$ is a positively graded local domain (not necessarily a polynomial ring), and observe that it is sharp in some cases. 
	
	\medskip
	
	\item (Theorem \ref{tEM}) We prove a generalized version of a conjecture of Eisenbud and Mazur in \cite{EiMa} concerning the annihilator of $I^{(m)}/I^m$ for some determinantal ideals $I$.
	
\end{itemize}

Furthermore, we propose the following conjecture over a polynomial ring $S$ with coefficients in a perfect field $k$.   
\begin{itemize}
	\item[] (Conjecture \ref{conject}) Let $I$ be a radical unmixed ideal of $S$. For any given $g\in S$, let ${\bf J}(g)$ denote the ideal generated by the partial derivatives of $g$ (possibly computed via divided powers if ${\rm char}(k)=p>0$).  Given $m\geq 2$, assume $I^{(m)}=(I^m, g_1, \ldots, g_s)$. Then, the $g_i$'s can be taken so that they satisfy 
	$${\rm rad} \left(\sum_{1\leq j\leq s}{\bf J}(g_j)\right)  = I.$$
	Note that one containment holds easily since ${\bf J}(g_j)\subseteq I$ for all $j$, by the classical Zariski--Nagata theorem.
\end{itemize}

This conjecture is supported by numerous examples, most of which have been computed via the computer algebra software {\em Macaulay2 } \cite{M2}.

\section{Preliminaries}

In this section we establish a few conventions and basic facts used throughout the paper. 

By {\it ring} we tacitly mean Noetherian, commutative ring with identity.
The multiplicity of a module has two slightly different definitions depending on the assumptions on the ring.
\begin{itemize}
	\item If $(S, {\bf m})$ is a local ring with residue field $k$, $\dim(S)=d$, and if $M$ is a finitely generated $S$-module, then the (Hilbert-Samuel) {\it multiplicity} of $M$ over $S$ is $${\rm e}({\bf m}, M)=\displaystyle \lim_{t\rightarrow
		\infty} \frac{d!}{t^d} {\rm dim}_{k}\left(\frac{M}{{\bf m}^tM}\right).$$ 
	
	\item If $S$ is a standard graded algebra over a field $k$ with homogeneous maximal ideal ${\bf m}$, the multiplicity is defined in the same way except that the fraction $d!/t^d$ must be replaced with $(d-1)!/t^{d-1}$. 
	
\end{itemize}
If $M=S$ one simply writes ${\rm e}({\bf m}, S)={\rm e}(S)$ for the multiplicity of $S$.

\smallskip

For a finitely generated $S$-module $M$, the $j$-th {\it Fitting ideal} of $M$ over $S$ is defined from any given $S$-free presentation 
$S^q\stackrel{\varphi}\rightarrow S^{\mu}\rightarrow M\rightarrow 0$ as the ideal $${\mathbf F}_j(M)  =  I_{\mu -j}(\varphi)$$ generated by the $(\mu - j)$-minors of $\varphi$,
where $\varphi$ is regarded as a presentation matrix of $M$ over $S$ (in case $j\geq \mu$ we set ${\mathbf F}_{j}(M)=S$). It is well-known that the Fitting ideals are independent of the choice of free presentation. 

\smallskip

As usual, the height of an ideal $L$ is denoted ${\rm ht}(L)$. Recall, for ideals $I, J$ of $S$, that: 
\begin{itemize}
	\item $I$ is {\em unmixed} of height $c$ if $\h(p)=c$ for every $p\in \Ass_S(S/I)$;
	
	\item The {\em unmixed part of $I$}, denoted $I^{\rm unm}$, is the ideal containing $I$ obtained by intersecting the minimal components of $I$ having minimal height (so $I$ is unmixed if and only if $I=I^{\rm unm}$);
	
	\item $I$ is {\it generically a complete intersection} if the localization $I_p$ can be generated by an $S_p$-sequence for every $p\in {\rm Ass}_S(S/I)$;
	
	\item The {\em saturation of $I$ by $J$} is the ideal $$I: J^{\infty}=\bigcup_{t\geq 1} I: J^t.$$
	By Noetherianess, we have an equality $I:J^{\infty} =  I: J^t$ for all $t \gg 0$.
\end{itemize}

Moreover, we use $\mu(M)$ to denote the minimal cardinality of a generating set of a finitely generated module $M$ (in either local or graded setting).

The key object of interest of the present paper is given in the following definition

\begin{definition}\rm Let $S$ be a ring, $I$ a proper ideal of $S$, $m\geq 2$ an integer. The {\it $m$-th symbolic power of $I$} is the ideal 
	$$I^{(m)} = \bigcap_{p\in {\rm Ass}_S(S/I)}I^m_p \cap S.$$ 
	
\end{definition}

\begin{remark}
	Questions about symbolic powers of $I$ may change drastically depending on the presence, or lack of, embedded primes in $\Ass_S(S/I)$.  For instance, if $(S,{\bf m})$ is local and ${\bf m} \in \Ass_S(S/I)$ then the above definition gives $I^{(t)}=I^t$ for every $t\geq 1$.  To avoid these trivializations, the vast majority of problems and papers about symbolic powers require $I$ to be (at least) radical or unmixed. We will make no exception and assume $I$ is unmixed throughout the paper. One should be aware that our results may fail for radical ideals having minimal primes of different heights; see, for instance, Example \ref{mixed}.  
\end{remark}

\section{A numerical characterization of symbolic powers}
While the inclusion $I^m\subseteq I^{(m)}$ always holds, in general equality occurs in relatively few cases (e.g., if $I$ is a complete intersection in a Cohen-Macaulay ring), and if the inclusion is strict it is often an extremely hard task to determine the minimal generators of $I^{(m)}$, even when $S$ is a polynomial ring in 3 variables over a field (in fact, even determining the degrees of some of these minimal generators!).

For special ideals (or classes of ideals) one may know some generators of $I^{(m)}$ 
and, for some reason, expect them to fully generate $I^{(m)}$. In this case one has the following question:

\begin{question}\label{ques}\rm Let $S$ be a Cohen-Macaulay ring, $I$ is an unmixed $S$-ideal with $\h(I)=c$ and let $m\in \ZZ_+$. If $J\subseteq I^{(m)}$ is an unmixed ideal with ${\rm ht}(J)={\rm ht}(I)$, when does the equality $J=I^{(m)}$ hold?
\end{question}

Theorem \ref{thm1} answers Question \ref{ques} under mild assumptions. It provides a numerical criterion depending solely on $m,c$ and the multiplicities of $S/J$ and $S/I$. It can be used to prove that a ``candidate" ideal is indeed equal to $I^{(m)}$.

\begin{theorem}\label{thm1} Let $S$ be a Cohen-Macaulay ring that is either local or standard graded over a field, and let $m\in \ZZ_+$. Let $I$ be an unmixed, generically complete intersection ideal and let $J\subseteq I^{(m)}$ be an ideal with $\h(J)=\h(I)=:c$.  Then the following assertions are equivalent:
	\begin{enumerate}
		\item[{\rm (i)}] $I^{(m)}=J^{\rm unm}$;
		\item[{\rm (ii)}] ${\rm e}(S/J)={\rm e}(S/I)\binom{c+m-1}{c}$.
	\end{enumerate}
\end{theorem}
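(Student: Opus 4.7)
The plan is to reduce both implications to the single identity
$e(S/I^{(m)}) = \binom{c+m-1}{c}\,e(S/I)$
together with the observation that the multiplicity of an ideal of height $c$ depends only on its unmixed part, so that $e(S/J) = e(S/J^{\rm unm})$.

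First I would establish the multiplicity formula for $I^{(m)}$. Since $S$ is Cohen-Macaulay, hence equidimensional, and $I$ is unmixed of height $c$, we have $\Ass(S/I^{(m)}) = \Min(I) = \Ass(S/I)$ with $\dim(S/\p) = \dim S - c$ for every such $\p$. The associativity formula for multiplicity gives
$$
e(S/I^{(m)}) \;=\; \sum_{\p \in \Min(I)} \lambda_{S_{\p}}\!\bigl(S_{\p}/I^{(m)}_{\p}\bigr)\, e(S/\p),
$$
and $I^{(m)}_{\p} = I_{\p}^{m}$ by definition of the symbolic power. Because $I$ is generically a complete intersection, $I_{\p}$ is generated by an $S_{\p}$-regular sequence of length $c$, so $\operatorname{gr}_{I_{\p}}(S_{\p})$ is a polynomial ring in $c$ variables over $S_{\p}/I_{\p}$. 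A standard hockey-stick summation of its Hilbert function yields
$$
\lambda_{S_{\p}}\!\bigl(S_{\p}/I_{\p}^{m}\bigr) \;=\; \binom{c+m-1}{c}\,\lambda_{S_{\p}}\!\bigl(S_{\p}/I_{\p}\bigr).
$$
Applying associativity once more to $e(S/I)$ produces the desired identity. The analogous associativity computation for $e(S/J)$ uses only the primary components of $J$ at minimal primes of height $c$, which are precisely those defining $J^{\rm unm}$, so $e(S/J) = e(S/J^{\rm unm})$. Together, these observations give (i) $\Rightarrow$ (ii) at once.

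For (ii) $\Rightarrow$ (i), I would first check that $J^{\rm unm} \subseteq I^{(m)}$. Each $\p \in \Ass(S/I^{(m)}) = \Min(I)$ has height $c = \h(J)$ and contains $J$, so it is also a minimal prime of $J$; hence $(J^{\rm unm})_{\p} = J_{\p} \subseteq I^{(m)}_{\p}$, and the standard argument via $\Ass\bigl((J^{\rm unm} + I^{(m)})/I^{(m)}\bigr) \subseteq \Ass(S/I^{(m)})$ globalizes this. Next, consider the short exact sequence
$$
0 \longrightarrow I^{(m)}/J^{\rm unm} \longrightarrow S/J^{\rm unm} \longrightarrow S/I^{(m)} \longrightarrow 0.
$$
Since $\Ass(I^{(m)}/J^{\rm unm}) \subseteq \Ass(S/J^{\rm unm})$ consists only of primes of height $c$, the left-hand module, if nonzero, is equidimensional of dimension $\dim S - c$. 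Additivity of multiplicity on modules of common top dimension, combined with (ii) and the formula already proved, then gives
$$
e\bigl(I^{(m)}/J^{\rm unm}\bigr) \;=\; e(S/J^{\rm unm}) - e(S/I^{(m)}) \;=\; 0.
$$
A nonzero equidimensional module of the full dimension must have strictly positive multiplicity by associativity (any associated prime contributes a positive term), so $I^{(m)}/J^{\rm unm} = 0$, i.e., $I^{(m)} = J^{\rm unm}$.

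The main obstacle I anticipate is the associated-prime bookkeeping: $J$ is not assumed unmixed and may well have minimal primes of height $c$ lying outside $\Min(I)$, so some care is needed to verify both the inclusion $J^{\rm unm} \subseteq I^{(m)}$ and that the multiplicity additivity and vanishing argument is phrased so that it works uniformly in the local and the standard graded settings stipulated by the theorem.
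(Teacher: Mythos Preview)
Your proof is correct and follows essentially the same route as the paper: compute $e(S/I^{(m)})=\binom{c+m-1}{c}\,e(S/I)$ via the associativity formula and the local complete-intersection length computation, then reduce the equivalence to comparing multiplicities of the unmixed ideals $J^{\rm unm}\subseteq I^{(m)}$. The paper compresses your short-exact-sequence argument for (ii)$\Rightarrow$(i) into a single appeal to the associativity formula, and it leaves the inclusion $J^{\rm unm}\subseteq I^{(m)}$ implicit, whereas you spell it out; your worry about height-$c$ minimal primes of $J$ outside $\Min(I)$ is harmless, since only localization at primes of $\Ass(S/I^{(m)})$ is needed to verify the inclusion, and those all lie in $\Min(J)$.
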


\begin{proof}
	Since $I$ is unmixed and the ideals $J\subseteq I^{(m)}$ have the same height, the ideals $J^{\rm unm}, I^{(m)}$ are  unmixed  of the same height. The assumptions on $S$ allow us to employ the associativity formula (see e.g. \cite[Theorem 14.7]{Mat}) from which it follows that the equality $J^{\rm unm}=I^{(m)}$ holds if and only if the two ideals have the same multiplicity. Therefore, to prove the theorem it suffices to show that ${\rm e}(S/I^{(m)})={\rm e}(S/I)\binom{c+m-1}{c}$.
	
	From the associativity formula and $\Ass_S(S/I^{(m)})=\Ass_S(S/I)$, we have
	$${\rm e}(S/I^{(m)}) = \sum_{p\in \Ass_S(S/I)}{\rm e}(S/p){\lambda}(S_p/I^m_p),$$
	where $\lambda(-)$ denotes the length of a module. We now compute ${\lambda}(S_p/I^m_p)$. By assumption, $L:=I_p$ is a complete intersection ideal in the local ring $R:=S_p$. Then $${\rm gr}_L(R)  \cong  (R/L)[x_1,\ldots, x_c]$$ where $x_1,\ldots, x_c$ are indeterminates over $R/L$, and hence, for each $t\geq 0$, the $R/L$-module $L^t/L^{t+1}$ is  free of rank $\binom{c+t-1}{c-1}$. Using the  short exact sequence
	$$0 \lra L^{m-1}/L^m \lra R/L^{m} \lra R/L^{m-1} \lra 0$$
	and induction on $m$, we find $${\lambda}(R/L^m)=\sum_{t=0}^{m-1}{\lambda}(L^t/L^{t+1})=\sum_{t=0}^{m-1}{\lambda}(R/L)\binom{c+t-1}{c-1}={\lambda}(R/L)\binom{c+m-1}{c}.$$
	Finally,
	$$\begin{array}{ll}
		{\rm e}(S/I^{(m)}) &=\displaystyle\sum_{p\in \Ass_S(S/I)}{\rm e}(S/p){\lambda}(S_p/I^m_p)\\
		& =\displaystyle\sum_{p\in \Ass_S(S/I)}{\rm e}(S/p){\lambda}(S_p/I_p)\binom{c+m-1}{c}\\
		& =\displaystyle{\rm e}(S/I)\binom{c+m-1}{c}.
	\end{array}$$ \end{proof}

\begin{example} In the standard graded polynomial ring $S=k[x, y, z, w, t]$, with $k$ a perfect field, consider the following perfect radical ideal of height $c=3$, 
	$$I  =  (xz, xw, yw, yt, zt).$$ 
	First, we note that $S/I^2$ is a Cohen-Macaulay ring, ${\rm e}(S/I)  =  5$ and ${\rm e}(S/I^2)  =  5 \cdot 4 = 20$.  
	Hence, Theorem \ref{thm1} yields $$I^{(2)}=I^2.$$
	Second, we analyze the third symbolic power of $I$.
	Since  $\binom{c+m-1}{c}=\binom{5}{3}=10$, Theorem \ref{thm1} gives, once we have found a ``candidate" unmixed ideal $J\subseteq I^{(3)}$, that we just need ${\rm e}(S/J)=5 \cdot10=50$.
	
	We observe that $g:=xyzwt \in I^{(3)}$ (either by looking at the minimal primes of $I$, or via the Zariski--Nagata theorem, which can be applied as $I$ is radical), 
	and $g\notin I^3$ for degree reasons. Thus,  $I^{(3)}\neq I^3$ and $(I^3, g)\subseteq I^{(3)}$. One can verify that $(I^3,g)$ is unmixed (in fact, perfect) and  that ${\rm e}(S/(I^3, g))  =  50.$  
	By Theorem \ref{thm1}, we conclude that $$I^{(3)}  =  (I^3, xyzwt).$$
\end{example}

\section{A saturation formula for symbolic powers}\label{Applications}

We first specify the set-up for this section.

\begin{Assumption}\label{ass}
	Let $S$ be a Cohen-Macaulay ring, $I$ an unmixed $S$-ideal that is generically a complete intersection with $\h(I)=c$. 
\end{Assumption}

For instance, in this setting $I^{(m)}=I^m$ if and only if $\Ass_S(S/I^m)=\Ass_S(S/I)$.

Our main motivation for this section is the following general question, which is wide-open because of the considerable challenges in computing the symbolic powers of ideals (even perfect ideals of height 2 in $k[x,y,z]$!).

\begin{question} 
	Let $S,I$ be as in Assumptions \ref{ass}, and let $m\in \ZZ_+$. How can $I^{(m)}$ be described effectively?
\end{question}

\subsection{A saturation formula} Our central result of this section  is the following formula for computing $I^{(m)}$ in terms of $I^m$ and a suitable ideal associated to $I$ that does not depend on $m$. 

\begin{theorem}\label{thm2} 
	Let $S,I$ be as in Assumptions \ref{ass}. For any $m\in \ZZ_+$, one has $$I^{(m)}  =  I^m: {\mathbf F}_c(I)^{\infty}.$$
\end{theorem}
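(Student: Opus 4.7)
The plan is to read off both ideals from an irredundant primary decomposition $I^m = Q_1\cap\cdots\cap Q_r$, with each $Q_i$ being $\p_i$-primary. A routine colon-ideal manipulation yields
$$I^m : {\mathbf F}_c(I)^{\infty}  =  \bigcap_{{\mathbf F}_c(I)\not\subseteq \p_i} Q_i,$$
and the unmixedness of $I$ gives $\Ass(S/I) = \Min(S/I) = \Min(S/I^m)$, so
$$I^{(m)}  =  \bigcap_{\p_i\in \Ass(S/I)} Q_i$$
by uniqueness of the primary components over minimal primes. The theorem therefore reduces to proving that, for every $\p \in \Ass(S/I^m)$,
$${\mathbf F}_c(I) \not\subseteq \p \ \Longleftrightarrow\ \p \in \Ass(S/I).$$

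The ``$\Leftarrow$'' direction is immediate from the generically complete intersection hypothesis: when $\p\in\Ass(S/I)=\Min(S/I)$, $I_\p$ is generated by a regular sequence of length $c$, so $\mu(I_\p)=c$, and hence ${\mathbf F}_c(I)_\p = S_\p$ by the standard fact that ${\mathbf F}_c(I)_\p = S_\p$ if and only if $\mu(I_\p)\le c$. For ``$\Rightarrow$'', suppose $\p\in\Ass(S/I^m)$ with ${\mathbf F}_c(I)\not\subseteq\p$. Then $\p\supseteq I$, and the same Fitting-ideal fact gives $\mu(I_\p)\le c$. Combined with $\h(I_\p)=c$ (forced by unmixedness of $I$) and the Cohen-Macaulay property of $S_\p$, this makes $I_\p$ a complete intersection of height $c$ in $S_\p$.

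At this point I would invoke the observation already used in the proof of Theorem \ref{thm1}: for a complete intersection $L$ in a Cohen-Macaulay local ring $R$, the associated graded $\operatorname{gr}_L(R)$ is a polynomial ring over $R/L$, so $R/L^m$ is Cohen-Macaulay and in particular $\Ass_R(R/L^m) = \Min_R(L)$. Applying this to $L=I_\p$ and $R=S_\p$, the hypothesis $\p\in\Ass(S/I^m)$, i.e.\ $\p S_\p\in\Ass_{S_\p}(S_\p/I_\p^m)$, forces $\p S_\p$ to be minimal over $I_\p$. Hence $\p$ is a minimal prime of $I$ in $S$, i.e.\ $\p\in\Ass(S/I)$. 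The step I expect to require the most care is this last one, because it is where the Cohen-Macaulayness of $S$, the unmixedness of $I$, and the generically complete intersection hypothesis (all accessed through the Fitting ideal) come together to rule out embedded primes of $I_\p^m$; everything else is bookkeeping with primary decomposition.
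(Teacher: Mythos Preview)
Your proof is correct and follows essentially the same route as the paper's: both arguments hinge on the equivalence, for $\p\in\Ass(S/I^m)$, between $\p\notin V({\mathbf F}_c(I))$ and $\p\in\Ass(S/I)$, established via the fact that if $\mu(I_\p)\le c$ then $I_\p$ is a complete intersection in the Cohen--Macaulay local ring $S_\p$ and hence $I_\p^m$ has no embedded primes. The only cosmetic difference is that the paper phrases the argument as ``$\Gamma:=\Ass(S/I^m)\setminus\Ass(S/I)\subseteq V({\mathbf F}_c(I))$'' and then saturates the primary decomposition $I^m=I^{(m)}\cap\bigcap_j Q_j$, whereas you state the biconditional directly and intersect over the surviving components.
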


\begin{proof}
	Let us first recall that 
	$$V({\mathbf F}_c(I))=\{p \in \Spec\,S\,\mid\, \mu(I_p)>c\}.
	$$ Since $\h(I)=c$ and $I$ is generically a complete intersection it follows that $\h({\mathbf F}_c(I))\geq c+1$ and thus $I^{(m)}:{\mathbf F}_c(I)^t=I^{(m)}$ for every $m\geq 1$ and $t\in \ZZ_+$.
	
	Now, if $\Ass_S(S/I^m) = \Ass_S(S/I)$ then  $I^{(m)}=I^m$, and by the above $I^m=I^m: {\mathbf F}_c(I)^{\infty}$, proving the statement in this case.
	
	We may then assume $\Gamma:=\Ass_S(S/I^m) - \Ass_S(S/I)$ is non-empty, and write $\Gamma=\{p_1,\ldots,p_r\}$. We now claim that $\Gamma\subseteq V({\mathbf F}_c(I))$.

	Indeed, since $\Min(I^m)=\Min(I)$, any $p \in \Gamma$ is an embedded prime of $S/I^m$, and so  $S_p/I^m_p$ has an embedded prime. In particular this implies that $I^m_p$ is not an unmixed ideal, and therefore $I_p$ is not a complete intersection ideal, i.e. $\mu(I_p)>c$. By the above observation, it follows that $p\in V({\mathbf F}_c(I))$, which proves the claim. 
	
	The claim yields that $Q:{\mathbf F}_c(I)^{\infty}=S$ for every $p$-primary ideal $Q$ with $p \in \Gamma$. Writing $I^m=I^{(m)}\cap \left(\bigcap_{j=1}^r Q_j\right)$, where each $Q_j$ is a $p_j$-primary ideal, we then obtain $$I^m:{\mathbf F}_c(I)^{\infty} =\left( I^{(m)}:{\mathbf F}_c(I)^{\infty}\right)\cap \left(\bigcap_{j=1}^r Q_j : {\mathbf F}_c(I)^{\infty}\right) = I^{(m)}: {\mathbf F}_c(I)^{\infty}=I^{(m)}.$$ \end{proof}

\subsection{Some examples, and equality between symbolic and ordinary powers} Below we illustrate that, in Theorem \ref{thm2}, neither of the two standing hypotheses on $I$ can be dropped, even in the presence of the other. We let $k$ denote a perfect field. 

\begin{example}\rm Consider the height 3 ideal $I=(x, y, z)^2\cap (y, z, w)$ in the polynomial ring $S=k[x, y, z, w]$. Note that $I$ is unmixed but not generically a complete intersection. Now, clearly $$I^{(m)}=(x, y, z)^{2m}\cap (y, z, w)^m \quad \mbox{for any} \quad m\geq 2.$$
	A calculation shows that $I^2=(x, y, z)^4\cap (y, z, w)^2$, i.e.,  $I^2=I^{(2)}$, but $I^2 \subsetneq I^2\colon {\mathbf F}_3(I)$. Hence $I^{(2)}\neq I^2\colon {\mathbf F}_3(I)^{\infty}$. An analogous behavior occurs for the third symbolic power.  First we notice that $I^3\colon {\mathbf F}_3(I)^{\infty}\neq I^3$ and  $I^3=(x, y, z)^6\cap (y, z, w)^3=I^{(3)}$, and then we get $$I^{(3)}  \neq  I^3\colon {\mathbf F}_3(I)^{\infty}.$$
\end{example}

\begin{example}\label{mixed}\rm Consider the height 2 ideal $I=(xyz, xtu, zwt, ywu)$ in the polynomial ring $S=k[x, y, z, w, t, u]$. Note that $I$ is generically a complete intersection (even radical), but not unmixed because $(x, w)$ and $(x, y, z)$ are minimal prime divisors of $I$ (for the latter, notice that $I\colon (wtu)=(x, y, z)$). Now, according to \cite[Remark 5.4]{Di-Dra}, we have, for any $m \ge 2$, 
	$$I^{(m)}= I^m + gI^{m-2} \quad \text{with} \quad g=xyzwtu.$$ In particular, $I^{(2)}=(I^2, g)$. However, a computation with {\em Macaulay2} \cite{M2} shows that, for example, the monomial $x^2yz^2t$ lies in $I^2\colon {\mathbf F}_2(I)$ but not in $(I^2, g)$. Hence $I^{(2)}\neq I^2 \colon{\mathbf F}_2(I)^{\infty} $. To study the third symbolic power, by the above formula we have $I^{(3)}=I^3+gI$. Suppose by way of contradiction that this ideal equals $I^3\colon{\mathbf F}_2(I)^{\infty}$. Then we would have $I^3\colon{\mathbf F}_2(I) \subseteq I^3+gI$. But this is a contradiction as a calculation shows that the ideal $I^3\colon{\mathbf F}_2(I)$ contains, for instance, the monomial $$x^3y^2z^3t  \notin  I^3+gI.$$
\end{example}

Next we give another consequence of the formula given in Theorem \ref{thm2}. First, we recall that it is an open problem to find sufficient (and possibly also necessary) conditions for the equality $I^{(m)}=I^m$ to occur (see, e.g., \cite[Section 4.2]{symb}). For instance, a celebrated conjecture in the context of Combinatorial Optimization raised by Conforti and Cornu\'ejols about the Max-Flow-Min-Cut property of clutters was translated by Gitler, Valencia and Villarreal into the following equivalent conjecture: for any squarefree monomial ideal $I$, one has $I^{(m)}=I^m$ for all $m\geq 1$ if and only if $I$ has the so-called {\em packing property}; see \cite[Conjecture 1.6]{Corn}, \cite[Corollary 3.14]{GVV}.

In this context, our next application provides a potentially useful characterization of the equality between symbolic and ordinary powers. 

\begin{corollary}\label{cor2} 
	Let $I$ be as in Assumption \ref{ass}, and let $m\in \ZZ_+$. Then, $$I^{(m)}  =  I^m$$ if and only if
	${\mathbf F}_c(I)$ contains an $S/I^m$-regular element.
\end{corollary}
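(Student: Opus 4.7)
The plan is to derive Corollary \ref{cor2} as a direct consequence of Theorem \ref{thm2}, exploiting the identification $I^{(m)} = I^m : {\mathbf F}_c(I)^\infty$ together with the standard characterization that an ideal contains a regular element on a module if and only if it is not contained in any associated prime of that module.

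For the ``if'' direction, suppose $f \in {\mathbf F}_c(I)$ is $S/I^m$-regular. By Noetherianness, $I^{(m)} = I^m : {\mathbf F}_c(I)^t$ for some $t \gg 0$. In particular $f^t \in {\mathbf F}_c(I)^t$, and $f^t$ remains regular on $S/I^m$. Thus, for any $g \in I^{(m)}$, we have $g f^t \in I^m$, which forces $g \in I^m$. Combined with the trivial inclusion $I^m \subseteq I^{(m)}$, this gives $I^{(m)} = I^m$.

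For the ``only if'' direction, I would invoke two facts recalled earlier in the paper. First, under Assumption \ref{ass}, one has $\h({\mathbf F}_c(I)) \geq c+1$, as used in the proof of Theorem \ref{thm2}. Second, since $I$ is unmixed and generically a complete intersection, $\Ass_S(S/I^{(m)}) = \Ass_S(S/I) = \Min(S/I)$ (each localization $I^m_{\mathfrak{p}}$ at $\mathfrak{p} \in \Ass_S(S/I)$ is $\mathfrak{p}S_{\mathfrak{p}}$-primary, being a power of a complete intersection in a local ring). Assuming $I^{(m)} = I^m$, it follows that $\Ass_S(S/I^m) = \Ass_S(S/I)$, so every associated prime of $S/I^m$ has height exactly $c$. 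Since $\h({\mathbf F}_c(I)) \geq c+1$, the ideal ${\mathbf F}_c(I)$ cannot be contained in any such associated prime. By (homogeneous) prime avoidance, ${\mathbf F}_c(I)$ therefore contains an element that avoids every $\mathfrak{p} \in \Ass_S(S/I^m)$, i.e., a regular element on $S/I^m$.

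There is no real obstacle in this argument once Theorem \ref{thm2} is available; the only subtle point to be careful about is confirming that $\Ass_S(S/I^{(m)}) = \Ass_S(S/I)$ under the standing hypotheses (so that the assumption $I^{(m)} = I^m$ genuinely controls the associated primes of $I^m$), and that prime avoidance is applied in the appropriate (graded or local) category matching the hypotheses on $S$.
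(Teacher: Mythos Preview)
Your proof is correct and follows essentially the same approach as the paper: the ``if'' direction matches the paper's argument verbatim, and for the ``only if'' direction the paper merely writes ``the converse is clear,'' which your height-and-prime-avoidance argument spells out correctly. A marginally more direct route (likely what the authors had in mind) is to note that $I^{(m)}=I^m$ forces $I^m:{\mathbf F}_c(I)=I^m$ by Theorem~\ref{thm2}, and the vanishing of $0:_{S/I^m}{\mathbf F}_c(I)$ is already equivalent to ${\mathbf F}_c(I)$ containing an $S/I^m$-regular element, without needing to identify $\Ass_S(S/I^m)$ explicitly.
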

\begin{proof} We have $I^m: {\mathbf F}_c(I)^{\infty}=I^m: {\mathbf F}_c(I)^t$ for $t$ sufficiently large. Now if $g \in {\mathbf F}_c(I)$ is an $S/I^m$-regular element then so is $g^t \in {\mathbf F}_c(I)^t$, and hence $I^m: {\mathbf F}_c(I)^t=I^m$, which by Theorem \ref{thm2} gives $I^{(m)}=I^m$. The converse is clear.
\end{proof}

\begin{example} Let $S$ be either a polynomial ring (standard graded or localized at the ideal of the origin) or a power series ring, in 5 indeterminates $x, y, z, w, t$ over a field of characteristic zero. Let $I$ be the ideal generated by the maximal minors of the $3\times 4$ matrix \[\left(\begin{array}{cccc}
		x   & w & y  & t \\
		y   & t & z & x  \\
		z  & x & w  & y  
	\end{array}\right).\] Then $I$ is a radical unmixed ideal with ${\rm ht}(I)=2$. Now, by computations we find that $$f:=xy-zt  \in  {\mathbf F}_2(I)$$ satisfies $I^m:(f)=I^m$ at least if $m\leq 7$. By Corollary \ref{cor2}, we have $$I^{(m)}  = I^m \quad \mbox{for} \quad 2\leq m \leq 7.$$
	In fact, by \cite[Theorem 1.1]{MFO},  one has $I^{(m)}  =  I^m$  for any $m\geq 1$.
\end{example}

\section{Effective bounds and proof of a conjecture of Eisenbud and Mazur} 

Theorem \ref{thm2} implies that $I^{(m)}=I^m : {\mathbf F}_c(I)^t$ for $t\gg 0$.  So, for both theoretical and computational purposes one may be interested in determining values of $t$ which can be used. Our main question in this direction is the following.

\begin{question}\label{power}
	Let $S,I$ be as in Assumptions \ref{ass}, and let $m\in \ZZ_+$. For which values of $t_0$ do we have
	$$
	I^{(m)} = I^m : {\mathbf F}_c(I)^t \quad \text{ for all } \quad t\geq t_0?$$ Equivalently, for which values of $t_0$ is $I^{(m)} = I^m : {\mathbf F}_c(I)^{t_0}$?
\end{question}

\begin{remark}\label{inclusion}
	Let $S,I$ be as in Assumptions \ref{ass}. 
	\begin{enumerate}
		\item  Since $\h({\mathbf F}_c(I))>\h(I)$, if one has $I^{(m)}\subseteq I^m:{\mathbf F}_c(I)^{t_0}$ for some $t_0$, then equality holds. 		
		Thus, to show that $I^{(m)} = I^m : {\mathbf F}_c(I)^{t_0}$, it suffices to prove 
		$${\mathbf F}_c(I)^{t_0} \subseteq I^m:I^{(m)} = {\rm ann}_S(I^{(m)}/I^m);$$
		
		\item Part (1) provides an additional reason for our interest in Question \ref{power}: an answer to it automatically gives an estimate on the size of the ideal $I^m:I^{(m)}$, because it shows that ${\mathbf F}_c(I)^{t_0}\subseteq {\rm ann}_S(I^{(m)}/I^m).$
	\end{enumerate}
\end{remark}

\begin{remark}
	Additional interest in Question \ref{power} is provided by the observation that a tight bound on $t_0$ can be used to provide lower bounds for the initial degree of $I^{(m)}$. An instance of this application is exhibited in Corollary \ref{alpha}.
\end{remark}

As an illustration of Remark \ref{inclusion}(2) and the interest in ${\rm ann}_S(I^{(m)}/I^m)$, we recall the following conjecture raised by Eisenbud and Mazur \cite[Conjecture, p. 197]{EiMa} (notice, there is a typo: the ideal ${\mathbf F}_1(I)=I$ should clearly be replaced with ${\mathbf F}_2(I)$, which in this case is the maximal ideal ${\bf m}$ generated by the 3 variables).
\begin{conjecture}\label{EM}{\bf (Eisenbud and Mazur)}
	If $I$ is the ideal of 2-minors of a $2\times 3$ matrix of general linear forms in a polynomial ring in 3 variables, then 
	$${\rm ann}(I^{(m)}/I^m)  =  {\mathbf F}_2(I)^{\left\lfloor\frac{m}{2}\right\rfloor}.$$
\end{conjecture}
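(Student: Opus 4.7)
The plan is to exploit the very concrete geometry of $I$. Since $I$ is generated by the $2\times 2$ minors of a generic $2\times 3$ matrix $M$ of linear forms in $S = k[x,y,z]$, the Hilbert--Burch resolution $0 \lra S^2 \stackrel{M^T}{\lra} S^3 \lra I \lra 0$ identifies $I$ with the saturated homogeneous ideal of three non-collinear points $P_1, P_2, P_3 \in \mathbb{P}^2$, and by genericity of the entries of $M$ one has ${\mathbf F}_2(I) = I_1(M^T) = \mathfrak{m}$. Let $\ell_{ij}$ denote the line through $P_i, P_j$ and set $f := \ell_{12}\ell_{13}\ell_{23}$, the cubic vanishing to order $2$ at each $P_i$; it is the unique ``extra'' generator of $I^{(2)}/I^2$. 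The starting input, classical for three general fat points in $\mathbb{P}^2$, is the decomposition $I^{(m)} = \sum_{a=0}^{\lfloor m/2 \rfloor} f^a \cdot I^{m-2a}$, verifiable either by explicit monomial-ideal computation in coordinates where the $P_i$ are the coordinate points (so that $f = xyz$ and $I = (xy, xz, yz)$) or by a Hilbert-function comparison using $\alpha(I^{(m)}) = \lceil 3m/2 \rceil$.

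The computational heart of the argument is the inclusion $\mathfrak{m} \cdot f \subseteq I^2$. For each line $\ell_{ij}$, the product $\ell_{ij} \cdot f$ is literally a product of two of the three conic generators of $I$ (for instance, $\ell_{12} \cdot f = (\ell_{12}\ell_{13})(\ell_{12}\ell_{23}) \in I^2$); and since $P_1, P_2, P_3$ are non-collinear, $\ell_{12}, \ell_{13}, \ell_{23}$ are linearly independent and hence span $S_1$. Iterating yields $(\mathfrak{m} f)^a = \mathfrak{m}^a f^a \subseteq I^{2a}$, so that for each $1 \leq a \leq \lfloor m/2 \rfloor$,
$$\mathfrak{m}^{\lfloor m/2 \rfloor}\cdot f^a \cdot I^{m-2a} \;\subseteq\; \mathfrak{m}^{\lfloor m/2\rfloor - a}\cdot (\mathfrak{m}^a f^a)\cdot I^{m-2a} \;\subseteq\; I^{2a}\cdot I^{m-2a} \;=\; I^m.$$
Together with the trivial case $a = 0$ and the decomposition above, this proves the inclusion $\mathfrak{m}^{\lfloor m/2 \rfloor} \subseteq {\rm ann}(I^{(m)}/I^m)$.

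For the reverse inclusion, ${\rm ann}(I^{(m)}/I^m)$ is a homogeneous ideal, so consider any nonzero homogeneous $g$ in it. If $m = 2k$, then $f^k \in I^{(m)}$ forces $gf^k \in I^m$; since $S$ is a domain, $gf^k$ is a nonzero form of degree $\deg g + 3k$, and as the initial degree of $I^m$ equals $2m = 4k$, we get $\deg g \geq k = \lfloor m/2 \rfloor$. If $m = 2k+1$, pick any conic generator $h$ of $I$: then $f^k h \in I^{(m)}$ (it vanishes to order $2k+1$ at each $P_i$), and the same degree comparison yields $\deg g \geq k$. In either case $g \in \mathfrak{m}^{\lfloor m/2 \rfloor}$, completing the proof.

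The principal obstacle is a rigorous proof of the decomposition $I^{(m)} = \sum_a f^a \cdot I^{m-2a}$ from the first paragraph: although heuristically transparent and classical, a self-contained verification beyond a specific coordinate choice requires either Hilbert-function bookkeeping for the fat-point scheme $mP_1+mP_2+mP_3$, whose postulation is non-generic precisely because of the curves $f^a$, or an inductive argument producing, in each degree $\geq \lceil 3m/2 \rceil$, enough minimal generators of $I^{(m)}$ to account for the length of the scheme.
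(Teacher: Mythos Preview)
Your argument is correct, and the ``obstacle'' you flag is less serious than you suggest: three non-collinear points in $\mathbb{P}^2$ are projectively equivalent, so after passing to $\overline{k}$ (as the paper itself does) a linear change of coordinates reduces you to the coordinate points, where $I=(xy,xz,yz)$, $f=xyz$, and the decomposition $I^{(m)}=\sum_{a}f^{a}I^{m-2a}$ is a direct monomial check (and is in any case a known structural result for star configurations, appearing for instance in the paper's own reference [Ma]).

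The route, however, is genuinely different from the paper's. The paper proves the more general Theorem~\ref{tEM} (for $N$ points spanning $\mathbb{P}^{N-1}$) and recovers Conjecture~\ref{EM} as the case $N=3$. For the inclusion $\mathfrak{m}^{\lfloor m/2\rfloor}\subseteq{\rm ann}(I^{(m)}/I^m)$, the paper does not use any explicit description of $I^{(m)}$; instead it quotes the invariants $\alpha(I^{(m)})=m+\lceil m/(N-1)\rceil$ and ${\rm reg}(I^{(m)})=2m$ from the star-configuration literature, infers that $[I^{(m)}]_d=[I^m]_d$ for all $d\geq 2m$, and reads off that $\mathfrak{m}^{\,2m-\alpha(I^{(m)})}I^{(m)}\subseteq I^m$. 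Your argument replaces this black-box step by the concrete identity $\ell_{ij}\cdot f=(\ell_{ij}\ell_{ik})(\ell_{ij}\ell_{jk})\in I^2$, which is more elementary and self-contained but tailored to $N=3$. For the reverse inclusion, the two proofs coincide: both pick a nonzero form of minimum degree in $I^{(m)}$ (your $f^k$ or $f^kh$ plays exactly the role of the paper's $g$) and compare degrees against $\alpha(I^m)=2m$. In short, your proof trades the paper's citation of $\alpha$ and ${\rm reg}$ for an explicit generator decomposition; the paper's version buys immediate generalization to arbitrary $N$, while yours buys transparency in the case at hand.
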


Conjecture \ref{EM} implies the following optimal answer to Question \ref{power} for the ideal $I$ of the conjecture: $\lfloor \frac{m}{2}\rfloor$ is the smallest value we can take for $t_0$.

Conjecture \ref{EM} is proved in \cite[Section~2]{RS} using tools from birational geometry. Here, we prove a stronger statement which implies Conjecture \ref{EM}. Our result removes the assumption on the number of variables of the ring and it applies, for instance, to ideals of 2-minors of $2\times N$ matrices of general linear forms, see Remark \ref{special}.

\begin{theorem}\label{tEM}
	Let $k$ be a perfect field, $S=k[x_0,\ldots,x_{N-1}]$ a standard graded polynomial ring in $N\geq 3$ variables, and ${\bf m}=(x_0,\ldots,x_{N-1})$. Consider a radical homogeneous ideal $I$ of $S$ with ${\rm ht}(I)=N-1$ and minimally generated by $\binom{N}{2}$ quadrics. Then, one has 
	$${\rm ann}_S(I^{(m)}/I^m)  =  {\bf m}^{\lfloor\frac{m(N-2)}{N-1}\rfloor}.$$
\end{theorem}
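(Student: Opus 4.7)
My strategy is a reduction to an explicit monomial ideal followed by a short combinatorial/degree argument. First, I would show that the hypotheses force $V(I) \subseteq \mathbb{P}^{N-1}$ to be $N$ points in linearly general position. Since $I$ is minimally generated in degree~$2$, one has $\dim_k(S/I)_1 = N$ and $\dim_k(S/I)_2 = \binom{N+1}{2} - \binom{N}{2} = N$. The hypotheses also make $S/I$ a reduced $1$-dimensional ring, hence Cohen-Macaulay, so a general linear form $\ell$ is a non-zerodivisor on $S/I$ and the Artinian reduction $S/(I,\ell)$ has $h$-vector beginning $(1, N-1, 0, \ldots)$. Any standard graded Artinian $k$-algebra is generated in degree~$1$, so once the Hilbert function hits $0$ it stays $0$; thus $\e(S/I) = N$ and the $N$ points of $V(I)$ span $\mathbb{P}^{N-1}$. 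Passing to the algebraic closure (licit since $k$ is perfect) and applying a linear change of variables, we may assume $I = (x_ix_j : 1 \le i < j \le N)$.

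Second, I would record two combinatorial criteria: a monomial $x^a$ lies in $I^{(m)}$ iff $a_i \le |a| - m$ for every $i$, while $x^e \in I^m$ iff $\sum_i \min(e_i, m) \ge 2m$. The first criterion gives $\alpha(I^{(m)}) = \lceil mN/(N-1)\rceil$. Setting $\alpha := \alpha(I^{(m)})$ and $k := \lfloor m(N-2)/(N-1) \rfloor$, an elementary check (using that the fractional parts of $mN/(N-1)$ and $m(N-2)/(N-1)$ sum to $0$ or $1$) yields the identity $\alpha + k = 2m$, which is the crux of both containments below.

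For ${\rm ann}_S(I^{(m)}/I^m) \subseteq {\bf m}^k$, take any homogeneous $g$ of degree $d < k$ and any $f \in I^{(m)}$ of degree exactly~$\alpha$. Since $S$ is a domain, $gf \ne 0$, and $\deg(gf) < \alpha + k = 2m$; as every element of $I^m$ has degree $\ge 2m$ (the ideal being generated in degree $2m$), we get $gf \notin I^m$, so $g \notin {\rm ann}_S(I^{(m)}/I^m)$. Thus the graded ideal ${\rm ann}_S(I^{(m)}/I^m)$ contains no homogeneous element of degree $< k$, hence is in ${\bf m}^k$. For the reverse inclusion ${\bf m}^k \cdot I^{(m)} \subseteq I^m$, monomiality reduces the task to checking $x^{a+b} \in I^m$ for any $x^a \in I^{(m)}$ and any $x^b$ with $|b| \ge k$. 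Setting $e = a+b$, so $|e| \ge \alpha + k = 2m$, if $x^e \notin I^m$ then the $I^m$-criterion forces a unique index $j$ with $\sum_{i \ne j} e_i \le m - 1$. But $\sum_{i \ne j} a_i \ge m$ (from $a_j \le |a| - m$) and $\sum_{i \ne j} b_i \ge 0$, yielding $\sum_{i \ne j} e_i \ge m$, a contradiction.

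The main obstacle I foresee is the opening reduction step: extracting the ``$N$ points in linearly general position'' structure of $V(I)$ from the abstract numerical hypotheses on $\mu(I)$ and $\h(I)$, via the Hilbert function and Cohen-Macaulayness analysis. Once $I$ is realized as the explicit monomial ideal $(x_ix_j : i < j)$, the rest of the proof reduces to the two short arguments above, both anchored on the identity $\alpha + k = 2m$.
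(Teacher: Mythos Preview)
Your proof is correct and follows the same overall architecture as the paper's: reduce to the algebraic closure, identify $V(I)$ as $N$ points spanning $\mathbb P^{N-1}$ (a star configuration), and then prove the two inclusions separately, with the inclusion $\mathrm{ann}\subseteq\mathbf m^{k}$ handled by the identical initial-degree argument (a minimal-degree element of $I^{(m)}$ times anything of degree $<k$ lands below degree $2m$, hence outside $I^m$).

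The genuine difference is in the inclusion $\mathbf m^{k}I^{(m)}\subseteq I^m$. The paper does not compute with monomials at all: it quotes that for a star configuration $\alpha(I^{(m)})=m+\lceil m/(N-1)\rceil$ and $\mathrm{reg}(I^{(m)})=2m$ (citing \cite{Ma} and \cite{Oaxaca}), and from the regularity bound deduces $[I^{(m)}]_d=[I^m]_d$ for all $d\ge 2m$; multiplication by $\mathbf m^{k}$ then pushes $I^{(m)}$ into degrees $\ge \alpha+k=2m$, hence into $I^m$. You instead write down explicit monomial membership tests for $I^{(m)}$ and $I^m$ (the conditions $a_i\le |a|-m$ and $\sum_i\min(e_i,m)\ge 2m$) and verify the inclusion by a short pigeonhole argument. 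Your route is more elementary and entirely self-contained---it recovers $\alpha(I^{(m)})$ from scratch and never needs the regularity of $I^{(m)}$---whereas the paper's route is terser but leans on structural results for star configurations established elsewhere. Both hinge on the same arithmetic identity $\alpha(I^{(m)})+\lfloor m(N-2)/(N-1)\rfloor=2m$.
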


\begin{proof}
	Since $\h(I)=N-1$ and $I$ is a radical ideal, the ring $S/I$ is Cohen--Macaulay. 
	Now, let $\overline{k}$ denote the algebraic closure of $k$, and $T:=S\otimes_k \overline{k} \cong \overline{k}[x_0,\ldots,x_{N-1}]$. Since $I$ is radical and $k$ is perfect, then $IT$ is radical in $T$. Then, by the purity of faithful flat extensions, it suffices to prove the statement in $T$, i.e. we may assume $k$ is algebraically closed. 
	
	Since $S/I$ is a reduced Cohen-Macaulay ring of dimension one and $k$ is algebraically closed, the ideal $I=I_X$ is the defining ideal of a set $X$ of points in $\mathbb P^{N-1}_k$. By assumption, $I$ is generated by ${N \choose 2}$ quadrics. It follows that $|X|=N$ and the points of $X$ span $\mathbb P^{N-1}_k$, therefore, $X$ is a star configuration in $\mathbb P^{N-1}_k$.

	Let us denote by $\alpha(-)$ the initial degree of a homogeneous ideal. One has $\alpha(I_X^{(m)}) = m + \left\lceil\frac{m}{N-1}\right\rceil$ (e.g. \cite[Theorem 4.12(1)]{Ma}) and ${\rm reg}(I_X^{(m)}) = 2m$ (e.g. \cite[Corollary 7.3]{Ma}, or \cite[Corollary 4.4]{Oaxaca}). In particular, this implies $\left[I_X^m\right]_d=\left[I_X^{(m)}\right]_d$ for every $d\geq 2m$. It follows that 
	$${\bf m}^e I_X^{(m)}\subseteq I_X^m \quad \mbox{for} \quad e = 2m-\left(m+\left\lceil\frac{m}{N-1}\right\rceil\right) = \left\lfloor\frac{(N-2)m}{(N-1)} \right\rfloor,$$ and so ${\bf m}^e \subseteq {\rm ann}_S(I_X^{(m)}/I_X^m)$. To prove equality we show that no form of degree $<e$ lies in ${\rm ann}_S(I_X^{(m)}/I_X^m)$. 
	In fact, consider a form $0 \neq g \in I_X^{(m)}$ of minimum degree. If there exists a form $f\in {\rm ann}_S(I_X^{(m)}/I_X^m)$ of degree $<e$,  then $fg\in I_X^m$ and $\deg(fg)<e + \alpha(I_X^{(m)})=2m$, contradicting the fact that $I_X^m$ is generated in degree $2m$.
\end{proof}

\begin{remark}\label{special}
	The assumptions of Theorem \ref{tEM} are satisfied if $I \subset S$ is the ideal generated by the 2-minors of a $2 \times N$ matrix whose entries are general linear forms. So, Conjecture \ref{EM} follows from the special case $N=3$.
\end{remark}

We now return to discussing bounds on the integer $t_0$ of Question \ref{power} in our general setting. We begin by deriving a non-explicit estimate for $t_0$. To this end, recall that the {\it index of nilpotency} of an ideal $L\neq S$ is $${\rm nil}\,L = {\rm min}\{r\geq 1 \, \mid \, (\sqrt{L})^r\subseteq L\}.$$  
Now, with notation as in the proof of Theorem \ref{thm2}, fix $m$ and, for each $j=1,\ldots, r$, let $$\eta_j  =  {\rm max}\{{\rm nil}\,Q_j\}$$
Then an easy adaption of the proof of Theorem \ref{thm2} gives the following estimate.  

\begin{corollary}
	\label{cor:use nilpotency}
	Setting $t_0  = {\rm max}\{\eta_1, \ldots, \eta_r\}$, one has 
	\[
	I^{(m)}  =  I^m\colon {\mathbf F}_c(I)^{t_0}
	\]
\end{corollary}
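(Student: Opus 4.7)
The plan is to refine the proof of Theorem \ref{thm2} by replacing the infinite saturation $I^m : \mathbf{F}_c(I)^{\infty}$ with a specific finite power dictated by the nilpotency indices of the embedded primary components of $I^m$. From that proof I already have two key inputs at my disposal: (a) a primary decomposition $I^m = I^{(m)} \cap \bigcap_{j=1}^r Q_j$, where each $Q_j$ is $p_j$-primary with $p_j \in \Gamma = \Ass_S(S/I^m) - \Ass_S(S/I)$, and (b) the inclusion $\mathbf{F}_c(I) \subseteq p_j = \sqrt{Q_j}$ for every $j$, which follows from the claim proved there that $\Gamma \subseteq V(\mathbf{F}_c(I))$.

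The new ingredient is just the definition of the nilpotency index: $p_j^{\eta_j} \subseteq Q_j$. Combining this with (b) gives $\mathbf{F}_c(I)^{\eta_j} \subseteq Q_j$ for each $j$, so with $t_0 = \max\{\eta_1, \ldots, \eta_r\}$ one has $\mathbf{F}_c(I)^{t_0} \subseteq Q_j$ and hence
$$Q_j : \mathbf{F}_c(I)^{t_0} = S \qquad \text{for all } j = 1, \ldots, r.$$
On the other hand, the height estimate $\h(\mathbf{F}_c(I)) \geq c+1 > c = \h(I^{(m)})$ from the proof of Theorem \ref{thm2} (together with unmixedness of $I^{(m)}$) immediately yields $I^{(m)} : \mathbf{F}_c(I)^{t_0} = I^{(m)}$.

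The conclusion then follows by distributing the colon over the intersection in the primary decomposition of $I^m$:
$$I^m : \mathbf{F}_c(I)^{t_0} = \bigl(I^{(m)} : \mathbf{F}_c(I)^{t_0}\bigr) \,\cap\, \bigcap_{j=1}^r \bigl(Q_j : \mathbf{F}_c(I)^{t_0}\bigr) = I^{(m)} \cap S \cap \cdots \cap S = I^{(m)}.$$
There is no real obstacle in this argument, which matches the author's description of it as an ``easy adaptation''; the only thing worth a line of verification is that each $\eta_j$ is a well-defined positive integer, and this is immediate from the fact that each $Q_j$ is a proper primary ideal in a Noetherian ring.
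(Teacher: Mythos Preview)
Your proof is correct and is precisely the ``easy adaptation'' of the proof of Theorem~\ref{thm2} that the paper alludes to without spelling out. The only additional step beyond that proof is exactly the one you supply: from $\mathbf{F}_c(I)\subseteq p_j$ and $p_j^{\eta_j}\subseteq Q_j$ one gets $\mathbf{F}_c(I)^{t_0}\subseteq Q_j$, whence $Q_j:\mathbf{F}_c(I)^{t_0}=S$, and the rest is identical.
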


The difficulty in applying this observation lies is the computation of $\eta_i$.

So, we are led to look for  {\it explicit} bounds for $t_0$. 
For instance, if $m=2$ then it follows by work of Eisenbud and Mazur \cite{EiMa} that the smallest possible value of $t_0$ (i.e., $t_0=1$) works. In fact we have the following theorem, which has evident potential computational applications. 

\begin{theorem}\label{m=2}
	Let $S,I$ be as in Assumptions \ref{ass}. Then one has 
	$$I^{(2)} = I^2 : {\mathbf F}_c(I).$$
\end{theorem}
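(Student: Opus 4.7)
The plan is to combine Theorem \ref{thm2}, Remark \ref{inclusion}(1), and the presentation-matrix analysis of $I^{(2)}/I^2$ from \cite{EiMa}. Since $\h(\mathbf{F}_c(I)) \geq c+1 > \h(I^{(2)})$, Remark \ref{inclusion}(1) shows that the inclusion $I^{(2)} \supseteq I^2 : \mathbf{F}_c(I)$ is automatic. Hence the theorem reduces to establishing the reverse inclusion $\mathbf{F}_c(I) \cdot I^{(2)} \subseteq I^2$.

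Fix a minimal free presentation $S^q \xrightarrow{\phi} S^\mu \to I \to 0$ of $I = (a_1, \ldots, a_\mu)$. It suffices to show that $\delta \cdot I^{(2)} \subseteq I^2$ for each $(\mu-c) \times (\mu-c)$ minor $\delta$ of $\phi$, say on row set $R$ and column set $C$. Let $K = (a_k : k \notin R)$, a $c$-generated sub-ideal of $I$. The $\mu - c$ syzygies coming from the columns of $\phi$ indexed by $C$ form a square linear system in the unknowns $(a_i)_{i \in R}$ whose coefficient matrix has determinant $\pm \delta$; Cramer's rule then yields $\delta \cdot a_i \in K$ for every $i \in R$, and hence $\delta \cdot I \subseteq K$. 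In particular, $\delta f \in K$ for every $f \in I^{(2)}$.

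The hard part will be to strengthen this to $\delta f \in IK$: once proved, $K \subseteq I$ gives $IK \subseteq I^2$, completing the argument. The containment $\delta \cdot I \subseteq K$ is purely formal, but producing coefficients $u_k \in I$ in a representation $\delta f = \sum_{k \notin R} u_k a_k$ genuinely uses $f \in I^{(2)}$ and is not elementary. This is where I plan to invoke the Eisenbud--Mazur analysis from \cite{EiMa}: locally at $p \in \Min(I)$ with $\delta \notin p$, the containment $\delta I_p \subseteq K_p$ forces $I_p = K_p$, so that $f \in I_p^{(2)} = I_p^2 = I_p K_p$ supplies the desired local expression. The unmixedness of $S/I$, combined with the fact (from the proof of Theorem \ref{thm2}) that $\mathbf{F}_c(I)$ avoids every embedded prime of $I^2$, should then let one patch the local representations into the global inclusion $\delta \cdot I^{(2)} \subseteq IK$, finishing the proof.
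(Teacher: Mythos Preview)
Your reduction is correct and is exactly the paper's proof: the paper writes the chain
\[
I^{(2)}\ \subseteq\ I^2:\mathbf{F}_c(I)\ \subseteq\ I^2:\mathbf{F}_c(I)^{\infty}\ =\ I^{(2)},
\]
with the rightmost equality being Theorem~\ref{thm2} and the leftmost inclusion being \cite[Theorem~6]{EiMa}, which states precisely that $\mathbf{F}_c(I)\,I^{(2)}\subseteq I^2$ under these hypotheses. So if you simply cite that result (as you essentially do), the proof is complete and identical to the paper's.

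The problem lies in your attempted sketch of \emph{why} $\delta\cdot I^{(2)}\subseteq IK$. First, the sentence ``$\mathbf{F}_c(I)$ avoids every embedded prime of $I^2$'' is the opposite of what the proof of Theorem~\ref{thm2} establishes: every embedded prime of $I^2$ \emph{contains} $\mathbf{F}_c(I)$. Second, even with the corrected statement, the local--global patching you propose does not go through. To conclude $\delta f\in IK$ (or $\delta f\in I^2$) you must verify the containment at every $p\in\Ass_S(S/IK)$ (resp.\ $\Ass_S(S/I^2)$), not merely at the minimal primes of $I$. At an embedded prime $p$ you only know $\delta\in p$; this does not place $\delta f$ in the $p$-primary component, since that would require a \emph{power} of $\delta$ --- and then you are back to $I^2:\mathbf{F}_c(I)^{\infty}$, i.e.\ Theorem~\ref{thm2}, not the sharp exponent $1$. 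The Eisenbud--Mazur proof is not a local--global argument of this kind; it is a direct determinantal/syzygy identity producing, for each minor $\delta$ and each $f\in I^{(2)}$, an explicit global expression of $\delta f$ as an element of $I^2$. Either quote their theorem as a black box, as the paper does, or reproduce that matrix computation; the patching outline you give cannot close the gap.
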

Thus, $I^{(2)} = I^2 : {\mathbf F}_c(I)^t$ for every $t\geq 1$.

\begin{proof}
	The statement follows by the chain of inclusions
	$$I^{(2)}\subseteq I^2:{\mathbf F}_c(I)\subseteq I^2:{\mathbf F}_c(I)^{\infty}=I^{(2)},$$ where the equality holds by Theorem \ref{thm2}, and the leftmost inclusion follows by \cite[Theorem 6]{EiMa}. 
\end{proof}

Our main result in this section generalizes Theorem \ref{m=2}.  
Its proof is inspired by \cite[Theorem 2.1]{HuRi} and employs linkage. For basic terminology about linkage we refer the reader to \cite{HU} or \cite{Mi}. 

\begin{theorem}\label{m-1}
	Let $S,I$ be as in Assumptions \ref{ass}. Then, for every $m\geq 2$, one has
	$$I^{(m)} = I^m : {\mathbf F}_c(I)^{m-1}.$$
\end{theorem}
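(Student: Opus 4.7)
The inclusion $I^m : {\mathbf F}_c(I)^{m-1} \subseteq I^{(m)}$ is immediate from Theorem~\ref{thm2}. The substantive content is the reverse inclusion
$${\mathbf F}_c(I)^{m-1} \cdot I^{(m)} \subseteq I^m,$$
which I would prove by induction on $m \geq 2$; the base case $m=2$ is Theorem~\ref{m=2}. For the inductive step, the cleanest path is to establish the stronger telescoping identity
$${\mathbf F}_c(I) \cdot I^{(m)} \subseteq I \cdot I^{(m-1)}, \qquad (\star)$$
valid for all $m \ge 2$; iterating $(\star)$ yields
$${\mathbf F}_c(I)^{m-1} I^{(m)} \subseteq {\mathbf F}_c(I)^{m-2} I \cdot I^{(m-1)} \subseteq \cdots \subseteq I^{m-1} I^{(1)} = I^m.$$

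To prove $(\star)$, I would employ linkage in the spirit of~\cite[Theorem~2.1]{HuRi}. For each generator $\delta$ of ${\mathbf F}_c(I)$ --- a $(\mu - c)$-minor of a presentation matrix $\varphi$ of $I$ --- Cramer's rule applied to the syzygies of $I$ encoded in $\varphi$ produces a regular sequence $\alpha \subseteq I$ of length $c$ with $\delta \cdot I \subseteq \alpha$, so $\delta$ lies in the link $K := \alpha : I$. The key intermediate statement is the linkage-type identity
$$I^{(n)} = \alpha^n : K^n \qquad (n \ge 1),$$
which I would establish by local checks. Since $\alpha^n$ is unmixed in the Cohen-Macaulay ring $S$, both inclusions reduce to verification at $\Ass(\alpha^n) = \Min(\alpha)$. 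At $\p \in \Ass(I) \subseteq \Min(\alpha)$, the equality $I_\p = \alpha_\p$ forces $K_\p = S_\p$, making the identity immediate. At $\q \in \Min(\alpha) \setminus \Ass(I)$, minimality of $\q$ in $V(\alpha) \supseteq V(I)$ precludes $\q \supseteq I$ (else $\q \in \Min(I) = \Ass(I)$), so $I_\q = S_\q$ and $K_\q = \alpha_\q$; the identity again holds after localization.

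Granting this,
$$\delta \cdot I^{(m)} \subseteq K \cdot (\alpha^m : K^m) \subseteq \alpha^m : K^{m-1}.$$
The concluding step identifies $\alpha^m : K^{m-1}$ with $\alpha \cdot (\alpha^{m-1} : K^{m-1}) = \alpha \cdot I^{(m-1)}$, and I expect this to be the main obstacle. The cancellation $\alpha^m : J = \alpha \cdot (\alpha^{m-1} : J)$ fails for arbitrary ideals $J$ (take $\alpha = (x^2, y^2)$ and $J = (xy)$ in $k[x,y]$), so the argument must exploit the specific structure of the link $K$. I would handle it by localizing at $\Ass(\alpha \cdot I^{(m-1)})$ and analyzing two cases: at $\p \in \Ass(I)$ both sides locally equal $\alpha_\p^m$, while at $\q \in \Min(\alpha) \setminus \Ass(I)$ the symbolic power $I^{(m-1)}$ localizes to the unit ideal and both sides collapse to $\alpha_\q$. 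Any remaining embedded primes of $\alpha \cdot I^{(m-1)}$ should be tamed by the strong condition $\delta \cdot I \subseteq \alpha$, which funnels $\delta \cdot I^{(m)}$ into the $\alpha$-divisible part of $I^{(m-1)}$. Summing over generators $\delta$ of ${\mathbf F}_c(I)$ then yields $(\star)$, completing the induction.
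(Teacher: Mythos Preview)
Your overall architecture matches the paper's: both arguments use Cramer's rule to place each generator $\delta$ of ${\mathbf F}_c(I)$ into a link $K=\alpha:I$ of $I$ via a complete intersection $\alpha\subseteq I$, and then iterate a one-step inclusion of the form $\delta\,I^{(t)}\subseteq \alpha\,I^{(t-1)}$ (your $(\star)$, with $\alpha$ in place of $I$) down to $I^m$. Two small preliminary points: you need the infinite residue field reduction so that any $c$ of the chosen generators form a regular sequence (otherwise $\alpha$ need not be regular), and you should note that $\delta\notin\p$ for $\p\in\Ass(S/I)$ is what gives $\alpha_\p=I_\p$ and hence $K_\p=S_\p$, i.e., the link is geometric.

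The genuine gap is exactly where you flag it. Your route through the identity $I^{(n)}=\alpha^n:K^n$ is fine as far as it goes, but the step
\[
\alpha^m:K^{m-1}\ \subseteq\ \alpha\cdot(\alpha^{m-1}:K^{m-1})=\alpha\,I^{(m-1)}
\]
cannot be completed by local checks at $\Min(\alpha)$ alone unless you first know that $\alpha\,I^{(m-1)}$ has no embedded primes. Your sentence about embedded primes being ``tamed'' by $\delta\,I\subseteq\alpha$ is not an argument; that containment gives you nothing about $\Ass(S/\alpha I^{(m-1)})$. The paper bypasses the colon-ideal detour entirely and proves the needed unmixedness directly: since $\alpha/\alpha^t$ is free over $S/\alpha^{t-1}$, one has
\[
\alpha/\alpha I^{(t-1)}\ \cong\ (\alpha/\alpha^t)\otimes_{S/\alpha^{t-1}} S/I^{(t-1)},
\]
which is a direct sum of copies of $S/I^{(t-1)}$; hence $\Ass_S(\alpha/\alpha I^{(t-1)})\subseteq\Ass_S(S/I)$, and the short exact sequence $0\to \alpha/\alpha I^{(t-1)}\to S/\alpha I^{(t-1)}\to S/\alpha\to 0$ forces $\Ass_S(S/\alpha I^{(t-1)})\subseteq \Ass_S(S/\alpha)$. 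With unmixedness in hand, the inclusion $K\,I^{(t)}\subseteq \alpha\,I^{(t-1)}$ follows from the local verification you already sketched, and your identity $I^{(n)}=\alpha^n:K^n$ becomes unnecessary. Insert this freeness argument in place of your final paragraph and the proof closes.
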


\begin{proof}
	
	Without loss of generality we may assume that the residue field is infinite. Thus,  we can take a system of generators of $I$ so that any $c$ of them form a regular sequence. Let $\Delta\in {\mathbf F}_c(I)$ be any ($\mu(I)-c$)-th minor of the presentation matrix of this system of generators. It follows by Cramer's rule that $\Delta$ lies in an ideal geometrically linked to $I$ (see \cite[Proof of Corollary 2.4]{HuRi}). We now prove the following claim.
	
	\smallskip
	
	{\bf Claim.} If $L$ is any ideal geometrically linked to $I$, then
	$LI^{(t)}\subseteq CI^{(t-1)}$ for every $t\geq 2$, where $C=L \cap I$ is the complete intersection defining the link.
	
	\smallskip 
	
	Since $C/C^t$ is a free $S/C^{t-1}$-module, the module $$C/CI^{(t-1)}\cong (C/C^t)/(I^{(t-1)}/C^{t-1})(C/C^t)\cong C/C^t \otimes_{S/C^{t-1}}S/I^{(t-1)}$$ is isomorphic to a direct sum of copies of $S/I^{(t-1)}$ as an $S/C^{t-1}$-module, and thus as an $S$-module. It follows that $\Ass_S(C/CI^{(t-1)}) =  \Ass_S(S/I^{(t-1)}) \subseteq \Ass_S(S/I)$. Now, from the short exact sequence
	$$0 \lra C/CI^{(t-1)} \lra S/CI^{(t-1)} \lra S/C \lra 0$$
	we deduce that $\Ass_S(S/CI^{(t-1)})\subseteq \Ass_S(S/I) \cup \Ass_S(S/C)=\Ass_S(S/C)$. Therefore $CI^{(t-1)}$ is an unmixed ideal of height $c = \h I$. 
	
	We show  $LI^{(t)}\subseteq CI^{(t-1)}$ by proving the inclusion locally at each $p\in \Ass_S(S/CI^{(t-1)})$.  If $I\subseteq p$, then $p\in \Min(I)$. Since $C = L \cap I$ and $\Ass_S (S/L) \cap \Ass_S (S/I)$ is empty, we obtain 
	$I_p = C_p$ and $L_p=S_p$, so
	$(LI^{(t)})_p = C_p^t = (CI^{(t-1)})_p$.
	If $p$ does not contain $I$, then $I_p=S_p$ and $L_p=C_p$, and in this case $(LI^{(t)})_p = C_p = (CI^{(t-1)})_p$. 
	This proves the claim.

	Now, if $\Delta_1,\ldots,\Delta_{m-1}$ are any of the minors generating ${\mathbf F}_c(I)$, then, by the above,  there exist complete intersections $C_1,\ldots,C_{m-1}$ in $I$ such that $\Delta_i I^{(t)}\subseteq C_i I^{(t-1)}$ for every $t\geq 2$. Note we can assume $m\geq 3$ (by Theorem \ref{m=2}). Then,
	$$
	\Delta_1\cdots \Delta_{m-1}I^{(m)}\subseteq \Delta_1\cdots \Delta_{m-2} C_{m-1}I^{(m-1)} \subseteq \ldots \subseteq C_1C_2 \cdots C_{m-1}I \subseteq I^m.
	$$
	Therefore  $I^{(m)}{\mathbf F}_c(I)^{m-1}\subseteq I^m$, and by Remark \ref{inclusion}(1) this concludes the proof.
\end{proof}

\begin{remark}\label{(non)sharp}
	While it is now obvious that for $m=2$ the above result provides an optimal exponent $t_0$ for Question \ref{power}, one may ask how tight  the bound $t_0 \leq m-1$ is in case $m \ge 3$. There are some examples for which the bound is sharp, and examples where actually lower bounds apply. Sharpness can be seen, e.g., in parts (ii) and (iii) of Example \ref{3indet} (to be given in the next section), corresponding respectively to the cases $m=3$ and $m=4$. Finally, to illustrate the opposite situation, just pick $I\subset k[x, y, z]$ as the ideal of $3$ points in $\mathbb P^2$, then ${\bf F}_2(I)={\bf m}$ and by Theorem \ref{tEM} it follows that the optimal $t_0$ is $\lfloor\frac{m}{2}\rfloor$ which is strictly smaller than $m-1$ for $m\geq 3$. 
\end{remark}

We now illustrate a couple of applications of Theorem \ref{m-1}. The first one, Corollary \ref{alpha}, provides a lower bound on the initial degree of $I^{(m)}$ (in a fairly general setting), and the second one, Corollary \ref{corrr}, provides an approximation of $I^{(m)}$ via the annihilator of the $(c+1)$-th exterior power of $I$.

Recall that for a homogeneous ideal $I$ in a graded reduced ring $S$, we clearly have $\alpha(I^{(m)})\leq \alpha(I^m)=m\alpha(I)$. Finding lower bounds on $\alpha(I^{(m)})$ is in general a much harder task. As a first consequence of Theorem \ref{m-1}, we obtain an explicit lower bound for any unmixed, generically complete intersection ideal $I$. We remark that it applies not just to polynomial rings, but actually to any positively graded Cohen--Macaulay domain.

\begin{corollary}\label{alpha}
	In addition to Assumptions \ref{ass}, assume $S$ is a domain positively graded over a field and $I$ is a homogeneous ideal.
	If $I^{(m)} = I^m : {\mathbf F}_c(I)^{t_0} $, then one has $$\alpha(I^{(m)})\geq m\alpha(I) - t_0\alpha({\mathbf F}_c(I)).$$ In particular, $\alpha(I^{(m)})\geq m(\alpha(I) - \alpha({\mathbf F}_c(I))) + \alpha({\mathbf F}_c(I))$. 
\end{corollary}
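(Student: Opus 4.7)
The plan is to extract the degree inequality directly from the saturation formula $I^{(m)} = I^m : {\mathbf F}_c(I)^{t_0}$, using the domain hypothesis to guarantee that a suitable product of minimal-degree elements is nonzero.

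First, I would choose a nonzero homogeneous element $h \in I^{(m)}$ with $\deg(h) = \alpha(I^{(m)})$, and a nonzero homogeneous element $g \in {\mathbf F}_c(I)$ with $\deg(g) = \alpha({\mathbf F}_c(I))$. From the assumed equality $I^{(m)} = I^m : {\mathbf F}_c(I)^{t_0}$ we get $h \cdot g^{t_0} \in I^m$. Because $S$ is a domain and neither $h$ nor $g$ is zero, the product $h g^{t_0}$ is nonzero. In a positively graded domain, $\alpha(I^m) = m\,\alpha(I)$ (the inequality $\alpha(I^m) \geq m\,\alpha(I)$ follows since any element of $I^m$ is a sum of $m$-fold products of elements of $I$ all of degree $\geq \alpha(I)$, while the reverse is attained by an $m$-th power of a minimal-degree element of $I$). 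Hence
\[
\alpha(I^{(m)}) + t_0\,\alpha({\mathbf F}_c(I)) \;=\; \deg(h g^{t_0}) \;\geq\; \alpha(I^m) \;=\; m\,\alpha(I),
\]
which rearranges to the main bound $\alpha(I^{(m)}) \geq m\,\alpha(I) - t_0\,\alpha({\mathbf F}_c(I))$.

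For the ``in particular'' statement, I would invoke Theorem \ref{m-1}, which guarantees that $t_0 = m - 1$ always satisfies the hypothesis of the corollary. Substituting this value and doing the elementary algebraic manipulation
\[
m\,\alpha(I) - (m-1)\alpha({\mathbf F}_c(I)) \;=\; m\bigl(\alpha(I) - \alpha({\mathbf F}_c(I))\bigr) + \alpha({\mathbf F}_c(I))
\]
yields the stated refinement.

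The only real subtlety is ensuring $h g^{t_0} \neq 0$, for which the domain assumption on $S$ is essential; without it, zero divisors could make the product vanish and destroy the degree comparison. Everything else is essentially a book-keeping of degrees, and the identity $\alpha(I^m) = m\,\alpha(I)$ is a standard consequence of the positively graded domain setting.
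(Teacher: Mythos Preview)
Your proof is correct and follows essentially the same approach as the paper: both use the inclusion ${\mathbf F}_c(I)^{t_0}I^{(m)}\subseteq I^m$ together with the multiplicativity of $\alpha$ in a graded domain, and then invoke Theorem~\ref{m-1} for the ``in particular'' part. The only cosmetic difference is that the paper phrases the degree computation at the level of ideals (using $\alpha(AB)=\alpha(A)+\alpha(B)$), whereas you pick explicit minimal-degree elements $h$ and $g$ and track $\deg(hg^{t_0})$.
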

\begin{proof}
	By assumption ${\mathbf F}_c(I)^{t_0}I^{(m)} \subseteq I^m$, so $\alpha(I^m) \leq \alpha ({\mathbf F}_c(I)^{t_0}I^{(m)})$.
	Since $S$ is a domain,   $\alpha(I^m)=m\alpha(I)$ and $\alpha ({\mathbf F}_c(I)^{t_0}I^{(m)}) =\alpha ({\mathbf F}_c(I)^{t_0}) + \alpha(I^{(m)}) = t_0\alpha ({\mathbf F}_c(I)) + \alpha(I^{(m)})$.
	This proves the first inequality. The second one follows with $t_0 = m-1$ by Theorem \ref{m-1}. 
\end{proof}

If $t_0$ is chosen optimally, one obtains equalities in some cases.
\begin{example}\label{2minors} In $k[x, y, z]$ let $I$ be any one of the following two ideals:
	\begin{itemize}
		\item[$($i$)$] $I=(xy, xz, yz)$ (the defining ideal of the three coordinate points in ${\bf P}^2$), or
		\item[$($ii$)$] $I$ is the ideal of $2$-minors of any $2\times 3$ matrix of general linear forms. 
	\end{itemize}
	In either case by Theorem \ref{tEM} one has $I^{(m)}= I^m :  {\mathbf F}_2(I)^{\left\lfloor\frac{m}{2}\right\rfloor}$. Thus, Corollary \ref{alpha} gives $$\alpha(I^{(m)})\geq m\alpha(I) -\left\lfloor\frac{m}{2}\right\rfloor = 2m - \left\lfloor\frac{m}{2}\right\rfloor =m + \left\lceil\frac{m}{2}\right\rceil. 
	$$
	In the proof of Theorem \ref{tEM} we have already seen that $m + \left\lceil\frac{m}{2}\right\rceil=\alpha(I^{(m)})$.  Hence, in this  example the lower bound in Corollary \ref{alpha} is actually attained.
\end{example}

In general, however, the bound of Corollary \ref{alpha} is not sharp. For example,  in $R=k[x,y,z]$, let $I  =  (x(y^2-z^2), y(z^2-x^2), z(x^2-y^2))$ be the defining ideal of a set of 7 simple points in ${\bf P}^2$ (this is the first of the so-called {\it Fermat ideals}). One has ${\bf F}_2(I)={\bf m}$ and, if $m=2$, of course the smallest $t_0$ one can take is $t_0=m-1=1$; in this case $\alpha (I^{(2)})=6>5=2\cdot 3 - 1$.
\medskip

Our second application of Theorem \ref{m-1} is an ``approximation" result.  It  suggests that the annihilator of the $(c+1)$-th exterior power of $I$, which is known to contain $I$ if ${\rm char}(S)=0$, may be employed to study symbolic powers. We thank M. Mastroeni for pointing out that the rightmost inclusion in the statement below is actually an equality.

\begin{corollary}\label{corrr}  Let $S,I$ be as in Assumption \ref{ass}, and let $m\in \ZZ_+$.
	If ${\rm char}(S)=0$ then
	$$I^m:({\rm ann}\wedge^{c+1}I)^{m-1} \subseteq I^{(m)} = I^m:({\rm ann}\wedge^{c+1}I)^{m-1} {\mathbf F}_{c+1}(I)^{m-1}.$$
\end{corollary}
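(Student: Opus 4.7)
The plan is to derive both assertions from Theorem~\ref{m-1} combined with two module-theoretic containments involving Fitting ideals and exterior powers. For the leftmost inclusion $I^m:({\rm ann}\wedge^{c+1}I)^{m-1}\subseteq I^{(m)}$, I would invoke the classical Fitting-annihilator fact ${\mathbf F}_c(I)\subseteq{\rm ann}_S(\wedge^{c+1}I)$, which holds for any finitely generated module over a Noetherian ring. Raising to the $(m-1)$-th power and using Theorem~\ref{m-1} then yields $I^m:({\rm ann}\wedge^{c+1}I)^{m-1}\subseteq I^m:{\mathbf F}_c(I)^{m-1}=I^{(m)}$.

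The equality $I^{(m)}=I^m:({\rm ann}\wedge^{c+1}I)^{m-1}{\mathbf F}_{c+1}(I)^{m-1}$ splits into two inclusions. The inclusion ``$\supseteq$'' (Mastroeni's observation) follows from a height argument: at any $\p\in\Ass_S(S/I)=\Min(I)$, the generically complete intersection hypothesis forces $\mu(I_\p)=c$, so the $c$-generated module $I_\p$ has $\wedge^{c+1}I_\p=0$ (any $(c{+}1)$-fold wedge in a $c$-generated module vanishes by pigeonhole) and ${\mathbf F}_{c+1}(I)_\p=S_\p$. Thus neither ${\rm ann}(\wedge^{c+1}I)$ nor ${\mathbf F}_{c+1}(I)$ is contained in $\p$, so localizing the saturation at each such $\p$ yields $I^m_\p$, and hence the saturation lies in $\bigcap_{\p\in\Min(I)}(I^m_\p\cap S)=I^{(m)}$.

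For the inclusion ``$\subseteq$'', the plan is to establish the key Fitting-ideal lemma ${\rm ann}_S(\wedge^{c+1}I)\cdot{\mathbf F}_{c+1}(I)\subseteq{\mathbf F}_c(I)$; granted this, $({\rm ann}\wedge^{c+1}I)^{m-1}{\mathbf F}_{c+1}(I)^{m-1}\subseteq{\mathbf F}_c(I)^{m-1}$, and Theorem~\ref{m-1} at once gives the required inclusion $({\rm ann}\wedge^{c+1}I)^{m-1}{\mathbf F}_{c+1}(I)^{m-1}\cdot I^{(m)}\subseteq I^m$. To prove the lemma I would pass to a free presentation $\varphi\colon S^q\to S^\mu$ of $I$: given $a\in{\rm ann}(\wedge^{c+1}I)$ and a $(\mu{-}c{-}1)$-minor $\delta=\det\varphi[P,Q]$, the defining relation $a\cdot e_{P^c}\in\mathrm{image}(\varphi)\wedge\wedge^{c}S^\mu$ inside $\wedge^{c+1}S^\mu$ (with $P^c=[\mu]\setminus P$ of size $c+1$) can be wedged with suitable $(\mu{-}1)$-forms to express $a$ as an $S$-linear combination of entries of $\varphi$ in the rows indexed by $P^c$. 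Extending $\varphi[P,Q]$ by adjoining one such row $p\in P^c$ and a new column $k\notin Q$ then produces a $(\mu{-}c)$-minor in ${\mathbf F}_c(I)$, and Laplace expansion along row $p$ recovers $\varphi_{p,k}\delta$ modulo terms of the form (entry of $\varphi$)$\cdot(\mu{-}c{-}1)$-minor.

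The hard part will be controlling these error terms, which a priori lie only in $I_1(\varphi)\cdot{\mathbf F}_{c+1}(I)$ rather than in ${\mathbf F}_c(I)$: one must exploit that the wedge decomposition of $a$ is available for \emph{every} $(c{+}1)$-subset $P^c\subseteq[\mu]$, not just a single one, in order to cancel the unwanted contributions via the additional coefficient relations coming from the other basis vectors of $\wedge^{c+1}S^\mu$. The hypothesis $\mathrm{char}(S)=0$ is recorded here because the interest of passing from ${\mathbf F}_c(I)$ to ${\rm ann}(\wedge^{c+1}I)$ lies in Vasconcelos's inclusion $I\subseteq{\rm ann}(\wedge^{c+1}I)$ in characteristic zero, which makes the annihilator a genuinely large ideal and so the resulting saturation formula a nontrivial approximation to $I^{(m)}$.
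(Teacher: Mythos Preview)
Your overall architecture is exactly that of the paper: both inclusions of Fitting/annihilator ideals
\[
({\rm ann}\wedge^{c+1}I)\,{\mathbf F}_{c+1}(I)\ \subseteq\ {\mathbf F}_c(I)\ \subseteq\ {\rm ann}\wedge^{c+1}I
\]
are fed into Theorem~\ref{m-1}, and the rightmost colon is upgraded to an equality by the height argument (your localization at $\p\in\Ass(S/I)$ is precisely the content of Remark~\ref{inclusion}(1), phrased via the containment ${\mathbf F}_c(I)\subseteq{\rm ann}\wedge^{c+1}I\cap{\mathbf F}_{c+1}(I)$). Your reading of the ${\rm char}(S)=0$ hypothesis as motivational rather than logically necessary also matches the paper's framing.

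The one substantive difference is that you propose to prove the containment $({\rm ann}\wedge^{c+1}I)\,{\mathbf F}_{c+1}(I)\subseteq{\mathbf F}_c(I)$ by hand, whereas the paper simply cites it as \cite[Lemma~2.1]{E-G} (Eisenbud--Green). Your sketch via wedging and Laplace expansion is headed in a reasonable direction, but as you yourself note, controlling the cross-terms is delicate, and your outline does not yet make clear how the relations coming from the various $(c{+}1)$-subsets $P^c$ are to be combined to eliminate the unwanted contributions in $I_1(\varphi)\cdot{\mathbf F}_{c+1}(I)$. Rather than complete this argument, you can (and the paper does) invoke the Eisenbud--Green lemma directly; it holds for any finitely presented module over a commutative ring and supplies both displayed inclusions at once. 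With that citation in place, your proof is complete and coincides with the paper's.
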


\begin{proof} Applying \cite[Lemma 2.1]{E-G}, one gets inclusions 
	$$({\rm ann}\wedge^{c+1}I) {\mathbf F}_{c+1}(I)   \subseteq {\mathbf F}_{c}(I)  \subseteq {\rm ann}\wedge^{c+1}I.$$ 
	This yields 
	$$I^m:({\rm ann}\wedge^{c+1}I)^{m-1}   \subseteq I^m:{\mathbf F}_{c}(I)^{m-1}  \subseteq I^m:(({\rm ann}\wedge^{c+1}I) {\mathbf F}_{c+1}(I))^{m-1},
	$$ 
	where the ideal in the middle is $I^{(m)}$ by Theorem \ref{m-1}. Finally, both $F_{c+1}(I)$ and ${\rm ann}\wedge^{c+1}I$ contain $F_c(I)$, so in particular, $\left({\rm ann}\wedge^{c+1}I\right)F_{c+1}(I)$ has height $>c$, thus Remark \ref{inclusion} implies that the right-most inclusion is actually an equality.
\end{proof}

\section{A conjecture connecting symbolic defects and Jacobian ideals}

We conclude the paper with a conjecture supported by a number of computations (performed with the computer algebra system {\em Macaulay2} \cite{M2}) related to the main results of this paper.

For this section, we let $S=k[x_1, \ldots, x_n]$ be a standard graded polynomial ring over a perfect field $k$, with homogeneous maximal ideal ${\bf m}$. For a suitable ideal $I$ of $S$, we propose a conjecture connecting the module $I^{(m)}/I^m$ to Jacobian ideals. In what follows, we denote the radical of $I$ by ${\rm rad}\,I$. We first recall the following concept, introduced in \cite{Gera}.

\begin{definition} Let $I$ be an unmixed or radical ideal $I$ of $S$ and $m\in \ZZ_+$. The {\it $m$-th symbolic defect of $I$} is 
	$${\rm sdefect}(I, m)  =  \mu(I^{(m)}/I^m),$$ i.e., the minimal number of generators of $I^{(m)}/I^m$. 
\end{definition}

Trivially, ${\rm sdefect}(I, 1)=0$. Moreover, it is well-known that if $I$ can be generated by a regular sequence (i.e., a complete intersection ideal) then ${\rm sdefect}(I, m)=0$ for every $m$; see \cite[Appendix 6, Lemma 5]{ZS}.

\begin{notation} Given a polynomial $g\in S$, we recall that the {\it Jacobian} (or {\it gradient}) ideal of $g$ is the ideal  
	$${\bf J}(g)  = \left(\frac{\partial g}{\partial x_1}, \ldots, \frac{\partial g}{\partial x_n}\right).$$  
	(If ${\rm char}(k)=p>0$ one uses divided powers to compute ``partial derivatives".) Since $k$ is perfect, if $I$ is a radical ideal of $S$ then for any $g\in I^{(2)}$ we have  ${\bf J}(g)\subseteq I$ by virtue of the Zariski-Nagata theorem (see \cite[subsection 2.1]{symb}), and hence
	$${\rm rad}\,{\bf J}(g)  \subseteq I.$$
\end{notation}

It is then natural to ask when equality holds. We analyze some examples below, where we also compute symbolic defects.

\begin{example}\label{3indet} (${\rm char}(k)=0$.) Let $I$ be the ideal of $S=k[x, y, z]$ generated by the maximal minors of the $2\times 3$ matrix \[\left(\begin{array}{cccc}
		x+y+z   & x+y & y+z\\
		x   & y & z 
	\end{array}\right).\] Then $I$ is a radical unmixed ideal with ${\rm ht}(I)=2$ and ${\mathbf F}_2(I)={\bf m}$. 
	
	\smallskip
	
	\begin{itemize}
		\item[{\rm (i)}] We have ${\rm sdefect}(I, 2)=1$. Indeed, Theorem \ref{m=2} yields $I^{(2)}  =  I^2\colon {\bf m}$, 
		and one obtains $$I^{(2)}  =  (I^2, g) \quad \text{with } g = x^3-xy^2+y^3-x^2z-3xyz+y^2z+2yz^2+z^3.$$ Here, ${\bf J}(g)=I$. In particular, ${\bf J}(g)$ is radical.

		\medskip

		\item[{\rm (ii)}] We have ${\rm sdefect}(I, 3)=3$. Explicitly, applying Theorem \ref{m-1}, 
		$$I^{(3)}  =  I^3\colon {\mathbf F}_2(I)^{2}  =  I^3\colon {\bf m}^2  =  (I^3, g_1, g_2, g_3), \quad {\rm where}$$
		{\scriptsize
			$$g_1 =  x^5-2x^3y^2+x^2y^3+xy^4-y^5-x^4z-4x^3yz+2x^2y^2z+4xy^3z-$$
			$$2y^4z+3x^2yz^2+3xy^2z^2-3y^3z^2+x^2z^3-3y^2z^3-yz^4,$$
			
			$$g_2  =  x^4y+3x^3y^2-x^2y^3-2xy^4+3y^5-3x^4z-2x^3yz-3x^2y^2z-$$ $$10xy^3z+2y^4z+2x^3z^2+10x^2yz^2+3xy^2z^2+4y^3z^2+x^2z^3-2xyz^3-3xz^4-3yz^4-z^5,$$
			
			$$g_3  =  x^3y^2-xy^4+y^5-x^4z-4xy^3z+y^4z+x^3z^2+3x^2yz^2-xy^2z^2+2y^3z^2-2xyz^3+y^2z^3-xz^4.$$} 
		
		By a computation we find that ${\rm rad}\,({\bf J}(g_1)+{\bf J}(g_2) +{\bf J}(g_3)) = I$.
		
		\medskip

		\item[{\rm (iii)}] We have ${\rm sdefect}(I, 4)=4$. By Theorem \ref{m-1}, 
		$$I^{(4)}  =  I^4\colon {\mathbf F}_2(I)^{3} =  I^4\colon {\bf m}^3  =  (I^4, g_1, g_2, g_3, g_4)$$ 
		for suitable polynomials $g_1, g_2, g_3, g_4\in S$ of large monomial support (which we will not write down explicitly). Their degrees are $6, 7, 7, 7$. Once again it can be confirmed that
		${\rm rad}\,({\bf J}(g_1)+{\bf J}(g_2) +{\bf J}(g_3)+{\bf J}(g_4)) = I$.
	\end{itemize}
\end{example}

\begin{example}\label{Hankel5} (${\rm char}(k)=0$.) Let $I$ be the ideal of $S=k[x, y, z, w, t]$ generated by the $2$-minors of the $3\times 3$ generic Hankel matrix \[\left(\begin{array}{cccc}
		x   & y & z\\
		y  & z & w\\ 
		z  & w & t 
	\end{array}\right).\] Then $I$ is a radical unmixed ideal with ${\rm ht}(I)=3$ and ${\mathbf F}_3(I)={\bf m}^3$. 
	
	\smallskip
	
	\begin{itemize}
		\item[{\rm (i)}] We have ${\rm sdefect}(I, 2)=1$. More precisely, applying Theorem \ref{m-1} one computes 
		$$I^{(2)}  =  I^2\colon {\bf m}^3  =  (I^2, g), \quad g = z^3-2yzw+xw^2+y^2t-xzt.$$ In this case, we have ${\rm rad}\,{\bf J}(g)=I$, and ${\bf J}(g)$ is not radical.

		\medskip

		\item[{\rm (ii)}] We have ${\rm sdefect}(I, 3)=6$. Indeed, using Theorem \ref{m-1}, 
		$$I^{(3)}  = I^3\colon {\mathbf F}_3(I)^{2}  =  I^3\colon {\bf m}^3  =  (I^3, g_1, \ldots, g_6), \quad {\rm where}$$
		{\scriptsize 
			$$g_1  =  y^2z^3-xz^4-2y^3zw+2xyz^2w+xy^2w^2-x^2zw^2+y^4t-2xy^2zt+x^2z^2t,$$ 
			
			$$g_2  =  yz^4-2y^2z^2w-xz^3w+3xyzw^2-x^2w^3+y^3zt-xyz^2t-
			xy^2wt+x^2zwt,$$
			
			$$g_3  =  yz^3w-2y^2zw^2+xyw^3-xz^3t+y^3wt+xyzwt-x^2w^2t-xy^2t^2+x^2zt^2,$$
			
			$$g_4  =  z^4w-2yz^2w^2+xzw^3-yz^3t+3y^2zwt-xz^2wt-xyw^2t-y^3t^2+xyzt^2,$$
			
			$$g_5  =  z^5-4y^2zw^2+xz^2w^2+2xyw^3+y^2z^2t-4xz^3t+2y^3wt+4xyzwt-3x^2w^2t-3xy^2t^2+3x^2zt^2,$$
			
			$$g_6  =  z^3w^2-2yzw^3+xw^4-z^4t+2yz^2wt+y^2w^2t-2xzw^2t-y^2zt^2+xz^2t^2.$$}

		A computation shows that ${\rm rad}\,({\bf J}(g_1)+\ldots +{\bf J}(g_6)) = I$.
	\end{itemize}
\end{example}

These examples are just a few among a much larger set of experiments we computed, for various values of $n$ and $m$. All of them support the following conjecture.  

\begin{conjecture}\label{conject} (${\rm char}(k)=0$.) Let $I$ be a radical unmixed ideal of $S$, and $m\geq 2$. If $s:={\rm sdefect}(I, m)\geq 1$, then we can write $I^{(m)}=(I^m, g_1,\ldots,g_s)$ for some $g_1,\ldots,g_s\in S$ satisfying
	$${\rm rad}\,\left({\bf J}(g_1)+\ldots +{\bf J}(g_s)\right)  =  I.$$
\end{conjecture}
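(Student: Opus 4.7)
The containment ${\rm rad}(\sum_j{\bf J}(g_j)) \subseteq I$, already observed in the statement of Conjecture \ref{conject}, holds for any $g_j\in I^{(m)}$: the Eisenbud--Mazur refinement of Zariski--Nagata \cite{EiMa} says that every $k$-derivation $D$ of $S$ satisfies $D(I^{(k)})\subseteq I^{(k-1)}$, so ${\bf J}(g_j)\subseteq I^{(m-1)}\subseteq I$ for every $j$. The content of the conjecture is the reverse inclusion, which requires a judicious choice of the representatives $g_j$ within their classes modulo $I^m$. My plan is to reduce this to a local statement at each minimal prime of $I$ and resolve it via a polar linear system/Nakayama argument.

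Write $I=\bigcap_{i=1}^r\p_i$ with $\p_i\in\Min(I)$. The $\p_i$ are incomparable, and by the easy direction every minimal prime of $\sum_j{\bf J}(g_j)$ contains some $\p_i$, so ${\rm rad}(\sum_j{\bf J}(g_j))=I$ reduces to showing that $\sum_j{\bf J}(g_j)\cdot S_{\p_i}$ is $\p_iS_{\p_i}$-primary for every $i$. Fix $\p=\p_i$, $R=S_\p$, $\fm=\p R$, and choose $f_1,\ldots,f_c\in\p$ whose images give a regular system of parameters of $R$; the Jacobian criterion (using $k$ perfect of characteristic zero and $I$ radical) makes the $c\times n$ matrix $(\partial_{x_j}f_i)\bmod\p$ of rank $c$. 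It follows that the derivation induced by $\partial_{x_j}$ on $\mathrm{gr}_\fm R=\kappa(\p)[F_1,\ldots,F_c]$ (with $F_i$ the class of $f_i$) has the form $\sum_i\overline{\partial_{x_j}f_i}\cdot\partial_{F_i}$; so, for any $g\in\fm^m$, the leading forms of the $\partial_{x_j}g$ in $\mathrm{gr}_\fm R_{m-1}$ span over $\kappa(\p)$ exactly the polar linear system $(\partial_{F_1}L(g),\ldots,\partial_{F_c}L(g))$ of the leading form $L(g)\in\kappa(\p)[F]_m$. By the standard lifting of homogeneous systems of parameters from $\mathrm{gr}_\fm R$ to $R$, the ideal $\sum_j{\bf J}(g_j)R$ is $\fm$-primary as soon as the union of the polar linear systems of $L(g_1),\ldots,L(g_s)$ contains a homogeneous system of parameters of $\kappa(\p)[F]$; this polar-SOP condition is a Zariski-open (hence dense) nonempty condition on $V_\p^{\oplus s}$, where $V_\p:=\fm^m/\fm^{m+1}$ (nonemptiness even for $s=1$ is witnessed by $L(g)=F_1^m+\cdots+F_c^m$, whose polar system is $(mF_1^{m-1},\ldots,mF_c^{m-1})$, a SOP).

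Starting from any minimal generators $\tilde g_1,\ldots,\tilde g_s$ of $I^{(m)}/I^m$, the final step is to modify each to $g_j:=\tilde g_j+h_j$ with $h_j\in I^m$ (which preserves the class, hence minimal generation) so that the polar-SOP condition holds simultaneously at every $\p_i$. Since $I^m\cdot S_\p=\fm^m$, the image of $I^m$ in $V_\p$ generates $V_\p$ over $\kappa(\p)$. Choosing a finite-dimensional $k$-subspace $W\subseteq (I^m)^s$ surjecting onto $V_{\p_i}^{\oplus s}$ as a $\kappa(\p_i)$-module for each $i=1,\ldots,r$, the image of $W$ inside $V_{\p_i}^{\oplus s}$ is Zariski-dense for each $i$ (using that $k$ is infinite, so $k$-rational points are Zariski-dense inside any $\kappa(\p_i)^N$). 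A finite intersection of nonempty Zariski-open subsets of $W$ is still nonempty, so one can find $(h_j)\in W\subseteq (I^m)^s$ meeting all polar-SOP conditions at once, and the resulting $g_j$'s satisfy ${\rm rad}(\sum_j{\bf J}(g_j))=I$.

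The principal obstacle is making the final step rigorous, since one must simultaneously control what happens at every minimal prime of $I$, whose residue fields $\kappa(\p_i)$ may have varying transcendence degree over $k$, and one must descend a $\kappa(\p_i)$-Zariski-generic condition to the existence of a single $k$-rational family of modifications $(h_j)$. The density argument sketched above (nonzero polynomials over $\kappa(\p_i)$ in finitely many variables admit nonzero values at $k$-rational points, thanks to $|k|=\infty$ in characteristic $0$) should suffice, but its careful formalization (in particular, a clean choice of the finite-dimensional $W$ and verification that the joint polar-SOP locus really pulls back to a nonempty Zariski-open of $W$) is the main technical hurdle. A more conceptual difficulty is that this approach gives only an existence statement, whereas the examples in the paper exhibit very specific algebraic forms for the $g_j$'s; a strengthened, fully explicit version of the conjecture would likely demand a different, more geometric, strategy closer in spirit to the Fitting-ideal formula of Theorem \ref{thm2}.
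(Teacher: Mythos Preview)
The paper does not prove this statement; it is stated as Conjecture~\ref{conject}, supported only by computational evidence (Examples~\ref{3indet}--\ref{nonhomog}) and the trivial inclusion recorded in Remark~\ref{worth}(a). So there is no paper proof to compare against, and your proposal is an attempt to settle an open problem.

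Your strategy is sound and, as far as I can see, correct. The reduction to showing that $\sum_j{\bf J}(g_j)S_{\p}$ is $\fm$-primary at each $\p\in\Min(I)$ is justified, since $I$ radical and unmixed forces $I_\p=\fm$ to be a complete intersection in the regular local ring $S_\p$. The computation that $\partial_{x_j}$ induces the $\kappa(\p)$-derivation $\sum_i\overline{\partial_{x_j}f_i}\,\partial_{F_i}$ on $\mathrm{gr}_\fm R\cong\kappa(\p)[F_1,\dots,F_c]$ is correct, and the rank of $(\overline{\partial_{x_j}f_i})$ equals $c$ by the conormal sequence in characteristic zero. The lifting step (from a homogeneous SOP in $\mathrm{gr}_\fm R$ to an $\fm$-primary ideal in $R$) holds because $\mathrm{gr}_\fm(R/J)\cong \mathrm{gr}_\fm R/\mathrm{in}(J)$ has finite length once $\mathrm{in}(J)$ contains such an SOP. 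Openness of the polar-SOP locus in $V_\p^{\oplus s}$ follows from properness of $\mathbb{P}^{c-1}$ over the parameter space, and non-emptiness via $L(g_1)=F_1^m+\dots+F_c^m$ uses ${\rm char}(k)=0$ exactly where expected.

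The step you rightly flag as delicate also goes through. Fix a $k$-basis of a finite-dimensional $W\subseteq (I^m)^s$ whose image spans each $V_{\p_i}^{\oplus s}$ over $\kappa(\p_i)$; then the pull-back of the bad locus at $\p_i$ is the zero set in $k^{\dim W}$ of polynomials with coefficients in $\kappa(\p_i)$, and expanding those coefficients in a $k$-basis of $\kappa(\p_i)$ turns each bad set into a proper $k$-Zariski-closed subset of $k^{\dim W}$ (properness because the image of $W$ is Zariski-dense in $V_{\p_i}^{\oplus s}$). Since $k$ is infinite, $k^{\dim W}$ is irreducible and the finite intersection of the complements is nonempty. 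The paper's own Example~\ref{defect1} is an explicit instance of your perturbation: for $I=(xy,xz,yz,tu)$ the choice $h_1=t^2u^2\in I^2$ corrects the leading form of $\tilde g_1=xyz$ precisely at the minimal primes containing $t$ or $u$.

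In short, this appears to be a proof of the conjecture rather than just a proposal; you should write it up carefully as such.
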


\begin{remark}\label{worth}\rm It is worth observing that:
	\begin{itemize}
		\item[(a)] The inclusion ${\rm rad}\,\left({\bf J}(g_1)+\ldots +{\bf J}(g_s)\right)  \subseteq  I$ holds by the Zariski--Nagata Theorem;
		
		\item[(b)] The $g_i$'s must be chosen carefull -- there are examples of {\em specific} $g_i$'s such that $I^{(m)}=(I^m, g_1,\ldots,g_s)$, but $g_1,\ldots,g_s$ do not satisfy Conjecture \ref{conject}. To illustrate it, we now provide two examples where $S,I$ are graded but the $g_i$'s {\em cannot} be homogeneous -- see Examples \ref{defect1} and \ref{nonhomog} below. Note that these symbolic powers even have symbolic defect 1.
	\end{itemize}
\end{remark}

The first example is a squarefree monomial ideal.
\begin{example}\label{defect1}
	(${\rm char}(k)\neq 2$.) Consider the ideal 
	$$I  = (xy, xz, yz, tu)  \subset  k[x,y,z,t,u].$$
	Then $I$ is a radical ideal having precisely 6 associated primes of height 3. One can check that $I^{(2)}=(I^2, xyz)$, so ${\rm sdefect}(I, 2)=1$. Since $I^2$ is generated in degree 4, $g:=xyz$ is the only homogeneous element $h\in S$ with the property that $I^{(2)}=(I^2,h)$. 
	However, ${\bf J}(g)=(xy,xz,yz)$, and thus we observe that ${\rm rad}\,{\bf J}(g) \neq I$.
	
	On the other hand, we can also write $$I^{(2)}=(I^2, g_1), \quad g_1:=g+t^2u^2=xyz + t^2u^2$$ and we clearly have $${\rm rad}\,{\bf J}(g_1)  = {\rm rad}\,(xy, xz, yz,  t^2u, tu^2)  =  I.$$ So, for the conjecture to be true in this case one needs to take an element $g_1$   which is not  homogeneous.\\
\end{example}

The second example is more geometric in nature. 
\begin{example}\label{nonhomog}
	Let $I$ be the ideal of 5 general points in $\mathbb P^2$, then $I$ is generated by a quadric and two cubics, and $I^{(2)}/I^2$ is generated by a single quintic, so ${\rm sdefect}(I,2)=1$. 
	{\em Macaulay2} \cite{M2} computations suggest that no homogeneous element $g_1$ of degree 5 in $I^{(2)}$ satisfies Conjecture \ref{conject}. 
\end{example}

\end{document}